\documentclass{amsart}
\usepackage{graphicx} 

\usepackage{stackengine}
    
\usepackage{amsmath,amssymb}
\usepackage{array} 
\usepackage{ amssymb }
\usepackage{amsfonts}
\usepackage{amsthm}
\usepackage{mathabx}
\usepackage{soul}
\usepackage{graphics}
\usepackage{graphicx}
\usepackage{float}
\usepackage[shortlabels]{enumitem}

\usepackage{marginnote}

\usepackage{enumitem}

\usepackage{ mathrsfs }

\usepackage[
colorlinks=true,
linkcolor=blue,
 citecolor=blue,
  urlcolor=blue,
     pagebackref,
]{hyperref}

\usepackage[normalem]{ulem}
\usepackage{bm}
  \newcolumntype{P}[1]{>{\centering\arraybackslash}p{#1}}
    \newcolumntype{M}[1]{>{\centering\arraybackslash}m{#1}}

\title[Restriction Theorem at zero Fourier dimension]{Slowly decaying Rajchman measures and a restriction theorem for the Fourier transform at the limit case of zero Fourier dimension}

\author{Iván Polasek}
\address[Iván Polasek]{Department of Mathematics,
University of Buenos Aires and IMAS-CONICET-UBA}
\email{ipolasek@dm.uba.ar}
\author{Ezequiel Rela}
\address[Ezequiel Rela]{Department of Mathematics,
University of Buenos Aires and IMAS-CONICET-UBA}
\email{erela@dm.uba.ar}
\address[Ezequiel Rela]{Guangdong Technion Israel INstitute of Technology, Shantou, China. } 
\email{ezequiel.rela@gtiit.edu.cn}

\keywords{Fourier transform restriction, Hausdorff dimension, Fourier dimension.}
\subjclass{Primary 42A38; Secondary 28A80}

\newtheorem{theorem}{Theorem}[section]
\newtheorem{lemma}[theorem]{Lemma}

\theoremstyle{definition}

\newtheorem*{remark}{Remark}

\newenvironment{proof 1}{\paragraph{Proof:}}{\hfill$\square$}

\graphicspath{{Figuras/}}

\begin{document}
\begin{abstract}
    In this article we prove the existence of sets $E \subseteq \mathbb{R}$ of zero Fourier dimension such that it is possible to restrict the Fourier transform to $E$ on a certain non-trivial range $[1,\tilde{p})$ with $1<\tilde{p}<2$. This builds upon Mockenhaupt's Restriction Theorem; while this theorem could only be applied to sets of positive Fourier dimension, we show that the existence of a measure with polylogarithmic Fourier decay combined with full Hausdorff dimension 1 on the real line is enough to guarantee restriction. In order to achieve this, we combine two different tools: a modification of a construction from a recent work of Li and Liu to produce a set with specific Hausdorff and Fourier dimensions, and a generalization of the Stein-Tomas-Mockenhaupt Restriction Theorem.
\end{abstract}
\maketitle

\section{Introduction and Main Results}

The Restriction Problem is a long-standing one that encompasses many distinct but related questions. It concerns the question of whether, for certain $E \subseteq \mathbb{R}^n$, $1 \le p < 2$, it is possible to restrict to $E$ the Fourier transform $\widehat{f}$ of every $f \in L^p (\mathbb{R}^n)$. One of the first substantial answers to this question can be traced back to the work of Stein and Tomas \cite{tom75} and asserts that, for every $\frac{1}{p} \ge \frac{n+3}{2n+2}, \frac{1}{q} \ge \frac{n+1}{n-1}\left( 1-\frac{1}{p} \right)$ we have that
\begin{equation}\label{eq: REpq notation sphere}
    \| \widehat{f} \|_{L^q (\mathbb{S}^{n-1}, d\sigma)} \lesssim \| f \|_{L^p (\mathbb{R}^n)} \quad \forall  f \in L^p (\mathbb{R}^n),
\end{equation} 
where $\mathbb{S}^{n-1}$ is the unit sphere of $\mathbb{R}^n$ and $d\sigma$ is its uniform measure. 

This initial result has been studied in more detail and generalized in many different directions. The range of $p$ and $q$ proved by Stein and Tomas is conjectured to actually be larger, namely $\frac{1}{p} \ge \frac{n+1}{2n}, \frac{1}{q} \ge \frac{n+1}{n-1}\left( 1-\frac{1}{p} \right)$. This is the so called Restriction Conjecture for the sphere; it has been proven for $n=2$ (see \cite{fef70}) and remains open for higher dimensions. In terms of the possible subsets $E \subseteq \mathbb{R}^n$ for which a restriction theorem might hold, it is known (see \cite{Stein-PrincetonBook}) that \eqref{eq: REpq notation sphere} holds for the same range of $p$ and $q$ when $\mathbb{S}$ is replaced by a compact subset of any hypersurface whose Gaussian curvature is nowhere zero. A trivial computation shows that curvature is somehow crucial here, since no restriction theorem holds for a flat surface. In general, when $E$ is a compact subset of $\mathbb{R}^n$ and there exists a measure $\mu$ supported on $E$ such that 
\begin{equation}\label{eq: REpq notation general}
    \| \widehat{f} \|_{L^q (E, d\mu)} \lesssim \| f \|_{L^p (\mathbb{R}^n)} \quad \forall  f \in L^p (\mathbb{R}^n)
\end{equation} 
holds for a certain pair of values $(p,q)$, we will say that a restriction theorem $R_E (p \rightarrow q)$ holds. The problem has also been studied in the context of particular hypersurfaces and curves or with different hypotheses. See for example \cite{dru85}, \cite{Tab85} and \cite{Gre81}.

All of these results relied on the geometry of the sets $E$ as subsets of $\mathbb{R}^n$, $n\ge 2$. In contrast with the remark on curvature above, Mockenhaupt's breakthrough paper \cite{moc00} showed that it is possible to prove restriction theorems for fractal subsets of the real line, for which the notion of curvature makes no sense. Mockenhaupt's Restriction Theorem relies on the existence of a measure supported on the set which satisfies two hypotheses related to the Hausdorff and Fourier dimension of the set. This approach allows to extend the analysis to even more general scenarios without any geometric structure, including the case of the abstract Fourier transform defined on groups (see \cite{papa2010}). It is common to refer to this theorem as the Stein-Tomas-Mockenhaupt theorem (STM), we include it here for reference.

\begin{theorem}\label{thm:STM}
    Let $\mu$ be a compactly supported measure on $E \subseteq \mathbb{R}^n$ such that
    \begin{enumerate}[(i)]
        \item $\mu (B(x,R)) \lesssim R^\alpha$ for some $\alpha>0, \forall x \in \mathbb{R}^n, R>0$.
        \item $|\widehat{\mu}(\xi)| \lesssim |\xi|^{-\beta/2}$ for some $\beta >0, \forall \xi \neq 0.$
    \end{enumerate}
    Then, a restriction theorem $R_E (p \rightarrow 2)$ holds for every $1 \le p < \frac{2(2n-2\alpha + \beta)}{4(n-\alpha)+\beta}$.
\end{theorem}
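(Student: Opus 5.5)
The plan is the standard $TT^*$ argument combined with a dyadic decomposition and complex interpolation, in the form used by Mockenhaupt. By duality, $R_E(p\to 2)$ is equivalent to the bound $\|\widehat{g\,d\mu}\|_{L^{p'}(\mathbb{R}^n)}\lesssim \|g\|_{L^2(\mu)}$. Since $TT^*$ is $f\mapsto f*\widehat{\mu}$ (up to reflection and conjugation), it suffices to prove
\[
\|f*\widehat{\mu}\|_{L^{p'}(\mathbb{R}^n)}\lesssim \|f\|_{L^{p}(\mathbb{R}^n)}
\]
for all Schwartz $f$. Indeed, $\|\widehat f\|_{L^2(\mu)}^2=\langle f*\widehat{\mu}^{\vee},f\rangle\le \|f*\widehat{\mu}^{\vee}\|_{p'}\|f\|_p$.

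To prove this bound, take a smooth Littlewood--Paley partition $1=\psi_0(x)+\sum_{k\ge1}\psi(2^{-k}x)$, with $\psi_0$ supported in $|x|\le 2$ and $\psi$ supported in $1/2\le|x|\le 2$. Write $K_k=\psi(2^{-k}\cdot)\widehat{\mu}$ and $K_0=\psi_0\widehat{\mu}$. The piece $K_0$ is bounded and compactly supported, so $f\mapsto f*K_0$ maps $L^p\to L^{p'}$ for all $1\le p\le 2$ by Young's inequality. For $k\ge 1$ there are two endpoint estimates.
\begin{enumerate}[(a)]
\item $L^1\to L^\infty$: by (ii), $\|K_k\|_\infty\lesssim 2^{-k\beta/2}$, hence $\|f*K_k\|_\infty\lesssim 2^{-k\beta/2}\|f\|_1$.
\item $L^2\to L^2$: by Plancherel this is $\|\widehat{K_k}\|_\infty$. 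Here $\widehat{K_k}=\mu*\big(2^{kn}\widehat{\psi}(2^k\cdot)\big)$, up to reflection. Since $\widehat\psi$ is Schwartz, one splits the integral into dyadic shells $|x-y|\sim 2^{j-k}$, $j\ge 0$, and uses (i) on each shell. This gives
\[
|\widehat{K_k}(x)|\lesssim \sum_{j\ge 0}2^{kn}(1+2^{j})^{-N}\,\mu\big(B(x,2^{j-k})\big)\lesssim 2^{k(n-\alpha)}.
\]
Hence $\|f*K_k\|_2\lesssim 2^{k(n-\alpha)}\|f\|_2$.
\end{enumerate}

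Riesz--Thorin interpolation between (a) and (b) with $\frac1p=\frac{1-\theta}{1}+\frac{\theta}{2}$ yields
\[
\|f*K_k\|_{p'}\lesssim 2^{k\left(-(1-\theta)\beta/2+\theta(n-\alpha)\right)}\|f\|_p .
\]
The series over $k$ converges when $\theta<\frac{\beta}{2(n-\alpha)+\beta}$. Since $\theta=2-\frac2p$, this condition reads $\frac1p>1-\frac{\beta}{2(2(n-\alpha)+\beta)}=\frac{4(n-\alpha)+\beta}{2(2n-2\alpha+\beta)}$, which is exactly the stated range $p<\frac{2(2n-2\alpha+\beta)}{4(n-\alpha)+\beta}$. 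Summing over $k$ with the triangle inequality, and adding the $K_0$ term, gives the bound for $f*\widehat{\mu}$, and the theorem follows.

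The one estimate that uses the Hausdorff-type hypothesis is the $L^2$ bound (b). The step needing the most care there is controlling $\mu*\widehat{\psi}_k$ uniformly via the ball condition (i), since the Schwartz tail must be summed over shells. This works because $(1+2^j)^{-N}2^{j\alpha}$ is summable in $j$. Note also that the range is open at the endpoint, precisely because the geometric series in $k$ is only summed strictly inside the range. This is why no restricted weak type refinement is needed.
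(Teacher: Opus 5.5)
Your proposal is correct and follows the route the paper has in mind. The paper does not prove this theorem itself; it cites Mockenhaupt and sketches exactly your argument: a $T^*T$ reduction to $f\mapsto f*\widehat{\mu}$, an $(1,\infty)$ bound from the decay (ii), a $(2,2)$ bound from the ball condition (i), then interpolation and summation over dyadic pieces. The sequence $\Gamma_k$ in Theorem \ref{thm: generalized mockenhaupt} is precisely your interpolated dyadic norm $2^{-k\beta(1-\theta)/2}\,2^{k(n-\alpha)\theta}$ with $\theta=2-\frac{2}{p}$, with $g,h$ replacing the power functions, and your exponent bookkeeping for the range checks out.
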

We include here a remark on the hypotheses in STM theorem:  hypothesis $(i)$ is called the Frostman condition, and it is well known that it implies $\dim_H (E) \ge \alpha$. Hypothesis $(ii)$ is a result on the Fourier decay of the measure $\mu$, and as such guarantees that $\dim_F (E) \ge \beta$ (for the precise definitions of the Hausdorff dimension $\dim_H$ and  the Fourier dimension $\dim_F$, see Section \ref{sec: def and res}). A set with coinciding Hausdorff and Fourier dimension is called a \emph{Salem} set.

There is an important reason for the theorem above to be of the form $R_E (p \rightarrow 2)$ and that is the $T^*T$ method. This is a well know fact that says that an operator $T$ is bounded from $L^p$ to $L^2$ if and only if the composition $T^*T$ is bounded from $L^p$ to $L^{p'}$. In the context of the restriction operator, it is not difficult to check that the problem reduces to obtain $(p,p')$ bounds for the convolution operator $f\to f*\widehat{d\mu}$. The interested reader could check the details in the Stein-Tomas-Muckenhaupt theorem to verify that this $(p,p')$ bound is a consequence of the interpolation between $(1,\infty)$ and $(2,2)$ norm inequalites for this convolution operator. The decay of $\widehat{d\mu}$ becomes evidently useful to control the supremum norm, whilst the local growth condition of $\mu$ appears when controlling the $L^2$ norm of the convolution as a consequence of certain control of the $L^\infty$ norm of the Fourier transform of  kernel.

Theorem \ref{thm:STM} has also been improved. Bak and Seeger \cite{BS-endpoint} have proved that the theorem holds as well for the endpoint $p_* = \frac{2(2n-2\alpha + \beta)}{4(n-\alpha)+\beta}, \alpha < n$. Hambrook and Łaba \cite{HamLab2013} worked on the optimality of the range of $p$ for the family of Salem sets in the real line. They were able to prove the existence of a Salem set $E \subseteq \mathbb{R}$ of dimension $\alpha \in (0,1)$ supporting a measure $\mu$ such that $(i)$ holds for the dimension $\alpha$, $(ii)$ holds for every $\beta < \alpha$ but the restriction theorem $R_E(p \rightarrow q)$ fails for every $p>\frac{2(2n-2\alpha + \beta)}{4(n-\alpha)+\beta}$. Chen \cite{che16} later improved on this result, proving the sharpness of the restriction theorem in the same sense for general sets, not necessarily Salem sets. Both of these proofs relied on probabilistic arguments. Fraser, Hambrook and Ryou \cite{FHR-2024} went a step futher and were able to construct deterministic subsets of the real line which explicitly show the sharpness of the Restriction theorem. 

In particular, according to hypotheses $(i)$ and $(ii)$, in order to apply STM theorem one should be working with a set $E$ where both dimensions are strictly positive. Our main purpose here is to focus on the fact that decay of the Fourier transform is a central aspect of a restriction theorem. Hence, we want to push STM theorem to the limit case of zero Fourier dimension, but with a mild decay so we still have a chance to get restriction.
A first step in this direction was achieved by the second author, whose doctoral thesis contains the following generalized version of STM Restriction theorem (\cite{Rela2010}). 

\begin{theorem}\label{thm: generalized mockenhaupt}
    Let $E \subseteq \mathbb{R}^n$ and $\mu$ a measure supported on $E$. Let $h$ be a doubling dimension function (see Section \ref{sec: def and res} for the definition), and $g: \mathbb{R}_{\ge 0} \longrightarrow \mathbb{R}_{>0}$ a decreasing function such that 
    \begin{itemize}
        \item $\mu(B(x,R)) \lesssim h(R) \ \forall x \in \mathbb{R}^n, R>0$
        \item $|\widehat{\mu}(\xi)| \lesssim g(|\xi|) \ \forall \xi \in \mathbb{R}^n$. 
    \end{itemize} 
    Let $\Gamma$ be the sequence defined by $$\Gamma_k=g(2^{k-1})^{\frac{2}{p}-1}(2^{nk}h(2^{-k}))^{2-\frac{2}{p}}.$$ Then if $\Gamma\in\ell^1$ for some $p$, there is a restriction theorem $R_E(p\to 2)$.
\end{theorem}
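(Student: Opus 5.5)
By the $T^*T$ method it suffices to show that the convolution operator $Tf = f * \widehat{\mu}$ is bounded from $L^p(\mathbb{R}^n)$ to $L^{p'}(\mathbb{R}^n)$. Indeed,
\[
\|\widehat{f}\|_{L^2(d\mu)}^2 = \langle f * \widehat{\mu}, f\rangle \le \|f*\widehat{\mu}\|_{p'}\|f\|_p ,
\]
up to reflections and conjugations. The plan is a dyadic decomposition of the kernel $\widehat{\mu}$ in frequency-space annuli, followed by Riesz–Thorin interpolation on each piece. The estimates are then summed, and summability is exactly the condition $\Gamma\in\ell^1$.

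Fix a smooth $\psi_0$ supported in $|x|\le 1$ and $\psi_k(x)=\psi(2^{-k}x)$ with $\psi$ supported in $\{1/2\le |x|\le 2\}$, so that $\sum_{k\ge0}\psi_k\equiv1$. Write $\widehat{\mu}=\sum_k K_k$ with $K_k=\psi_k\widehat{\mu}$ and let $T_k f=f*K_k$.

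\textbf{$(1,\infty)$ bound.} On the support of $K_k$ we have $|x|\ge 2^{k-1}$, so $g$ being decreasing gives
\[
\|T_k\|_{1\to\infty}\le \|K_k\|_\infty \lesssim g(2^{k-1}).
\]
For $k=0$ we use $|\widehat{\mu}|\le \mu(\mathbb{R}^n)$, which is also $\lesssim g(0)$ up to constants.

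\textbf{$(2,2)$ bound.} By Plancherel, $\|T_k\|_{2\to2}=\|\widehat{K_k}\|_\infty$. Up to reflection, $\widehat{K_k}=\widehat{\psi_k}*\mu$, and $\widehat{\psi_k}(\xi)=2^{nk}\widehat{\psi}(2^k\xi)$ is a Schwartz function concentrated at scale $2^{-k}$. Hence
\[
|\widehat{\psi_k}*\mu(\xi)|\lesssim 2^{nk}\int (1+2^k|\xi-\eta|)^{-N}\,d\mu(\eta).
\]
Splitting into shells $|\xi-\eta|\sim 2^{j-k}$ with $j\ge 0$, the Frostman-type hypothesis bounds this by
\[
2^{nk}\sum_{j\ge0}2^{-Nj}\,h(2^{j-k}).
\]
This is the step I expect to be the main obstacle: one must control $h(2^{j-k})$ by $h(2^{-k})$ times a factor that grows at most polynomially in $2^j$. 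This is where the doubling hypothesis on $h$ is used: iterating $h(2r)\le C h(r)$ gives $h(2^{j}r)\le C^j h(r)$. Choosing $N>\log_2 C$ makes the series converge, so
\[
\|T_k\|_{2\to2}\lesssim 2^{nk}h(2^{-k}).
\]
One also has to check that the implicit constants do not depend on $k$, and that large $j$, where $h$ may saturate at $\mu(\mathbb{R}^n)$, causes no trouble.

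\textbf{Interpolation and summation.} Take $1\le p\le 2$ and write $\frac1p=\frac{1-\theta}{1}+\frac{\theta}{2}$, so that $\theta=2-\frac2p$ and $1-\theta=\frac2p-1$. Riesz–Thorin then gives
\[
\|T_k\|_{p\to p'}\lesssim g(2^{k-1})^{\frac2p-1}\bigl(2^{nk}h(2^{-k})\bigr)^{2-\frac2p}=\Gamma_k .
\]
Since $T=\sum_k T_k$ (in the sense of distributions, or on Schwartz functions), we get $\|T\|_{p\to p'}\le\sum_k\Gamma_k<\infty$ whenever $\Gamma\in\ell^1$. The $T^*T$ argument then yields $R_E(p\to2)$, first on a dense class and then by density on all of $L^p$.
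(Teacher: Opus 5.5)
Your proposal is correct and follows the same route the paper indicates. The paper does not reprove this theorem (it is quoted from Rela's thesis), but its introduction describes exactly your scheme: the $T^*T$ reduction to $f\mapsto f*\widehat{\mu}$, the $(1,\infty)$ bound from the decay $g$, the $(2,2)$ bound from the growth condition on $\mu$ with the factor $2^{nk}$ coming from the dilation $\widehat{\psi_k}(\xi)=2^{nk}\widehat{\psi}(2^k\xi)$, interpolation with $\theta=2-\frac{2}{p}$, and summation of the $\Gamma_k$. Two cosmetic points: your partition of unity needs $\psi_0$ supported in $|x|\le 2$ (e.g.\ $\psi_0=\eta$ and $\psi=\eta-\eta(2\,\cdot)$ with $\eta\equiv 1$ on $|x|\le 1$), and your worry about large $j$ is moot, since doubling holds for all $t\ge 0$; in fact covering $B(\xi,2^{j-k})$ by $O(2^{jn})$ balls of radius $2^{-k}$ already gives $\mu(B(\xi,2^{j-k}))\lesssim 2^{jn}h(2^{-k})$ without using doubling at all.
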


For a long time, it was unclear how to apply this result in order to obtain a restriction theorem for a zero-dimensional set. The main purpose of this paper is to construct a family of zero-Fourier dimensional sets $E$ for which Theorem \ref{thm: generalized mockenhaupt} holds, provided that the measure verifies a sufficiently fast polylogarithmic Fourier decay and a Frostman condition that will force $\dim_H (E) = 1$. This condition on the Hausdorff dimension seems unavoidable, at least with the approach presented here. The reason can be found by noticing that the sequence $\Gamma_k$ above includes a factor of the form $2^{nk}$. A careful exploration of the proof shows that this factor comes from the effect of a dilation under the Fourier transform, making quantitatively explicit the dependance on the dimension of the ambient space. In the case of the real line, that dimension is 1, hence the condition on the dimension function $h$. In the same spirit as in STM result, the relevance of Theorem \ref{thm: generalized mockenhaupt} relies on the existence of a set satisfying the hypothesis. That is the main contribution of this article that we present here. 
\begin{theorem}\label{thm: main theorem}
   Let $r>1, a>0$. There exist a compact set $E\subset \mathbb{R}$ with $\dim_H(E)=1$ and $\dim_F(E)=0$ such that there is a restriction theorem $R_E(p \rightarrow 2)$ for $1 \le p < 1 + \frac{r-1}{1+r+2ar}$.
\end{theorem}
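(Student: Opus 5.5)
The plan is to get Theorem \ref{thm: main theorem} from Theorem \ref{thm: generalized mockenhaupt} in dimension $n=1$. For that I will build a compact set $E$ carrying a probability measure $\mu$ with
\[
\mu(B(x,R)) \lesssim h(R):=R\,\bigl(\log(1/R)\bigr)^{ar}, \qquad |\widehat{\mu}(\xi)|\lesssim g(|\xi|):=\bigl(\log(2+|\xi|)\bigr)^{-r}.
\]

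First the arithmetic, which fixes the exponents. We have $2^k h(2^{-k})\approx k^{ar}$ and $g(2^{k-1})\approx k^{-r}$, so
\[
\Gamma_k\approx k^{-r(\frac2p-1)+ar(2-\frac2p)} .
\]
This is in $\ell^1$ exactly when $r(\frac2p-1)-2ar(1-\frac1p)>1$. Writing $u=1/p$, the condition becomes $u(2r+2ar)>1+r+2ar$, that is,
\[
p<\frac{2r+2ar}{1+r+2ar}=1+\frac{r-1}{1+r+2ar}.
\]
This is the range in the statement, so $R_E(p\to2)$ follows once $\mu$ is constructed. The function $h$ is doubling. The Frostman bound with $h$ gives $\mu(B(x,R))\lesssim R^{1-\varepsilon}$ for every $\varepsilon>0$, so the mass distribution principle yields $\dim_H E=1$.

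The construction is a Cantor-type set, modifying Li--Liu, with rapidly growing scales $N_1\ll N_2\ll\cdots$ (say $\log N_{j+1}\ge N_j$).
\begin{enumerate}[(a)]
\item At most stages, each surviving interval is split essentially uniformly, keeping all but a tiny fraction of the subintervals. This loses only the polylogarithmic factor in $h$, and the loss is where $a>0$ enters.
\item At a sparse sequence of stages $j$ with modulus $q_j\approx N_j$, only the subintervals lying within distance $\approx 1/(q_jM_j)$ of the lattice $\frac1{q_j}\mathbb{Z}$ are kept, each with equal mass.
\end{enumerate}
Here $M_j$ is a polylogarithmic quantity in $q_j$. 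The mass at scale $1/(q_jM_j)$ is then inflated by a factor $M_j$. This is compatible with $h$ precisely if $M_j\lesssim(\log q_jM_j)^{ar}$, and this is how $a$ and $r$ are tied to the cluster size.

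Next, $\dim_F E=0$. Let $\nu$ be any probability measure on $E$. Since $\nu$ lives near $\frac1{q_j}\mathbb{Z}$ at precision $1/(q_jM_j)$, the pushforward of $\nu$ under $x\mapsto q_jx \bmod 1$ is concentrated on an interval of length $\lesssim 1/M_j$. Hence $|\widehat\nu(mq_j)|\ge 1/2$ for $|m|\le cM_j$, and in particular $|\widehat\nu(q_j)|\ge 1/2$ for all large $j$. So no $\nu$ supported on $E$ can have any power decay, and $\dim_F E=0$.

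The catch is that the same lower bound also applies to $\mu$ itself: $|\widehat\mu(q_j)|\gtrsim 1$ contradicts the desired decay $g(q_j)\approx(\log q_j)^{-r}\to0$. So the clusters in (b) must be arranged more cleverly. Instead of a single residue near $0$ modulo $1/q_j$, the stage-$j$ mass will sit near $K_j$ well-spread residues: a random or arithmetic choice of $K_j\approx(\log q_j)^{2r}$ cosets of a scale-$1/(q_jM_j)$ neighbourhood. With this arrangement:
\begin{itemize}
\item Every $\nu$ on $E$ still satisfies $\sum_{|m|\le M_j}|\widehat\nu(mq_j)|^2\gtrsim M_j/K_j$. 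This forces $\sup_{|\xi|\approx q_jM_j}|\widehat\nu(\xi)|\gtrsim K_j^{-1/2}$, which is polylogarithmic, so again no power decay and $\dim_F E=0$.
\item For $\mu$ itself, the random or equidistributed choice of residues gives square-root cancellation, $|\widehat\mu(\xi)|\lesssim K_j^{-1/2}\sqrt{\log q_j}\lesssim(\log|\xi|)^{-r}$ in the frequency band belonging to stage $j$.
\end{itemize}
The main estimate, and the step I expect to be the main obstacle, is the global bound $|\widehat\mu(\xi)|\lesssim(\log|\xi|)^{-r}$ for all $\xi$. I will write $\widehat\mu(\xi)$ as an infinite product of the stage-wise trigonometric polynomials $\prod_j \Phi_j(\xi/N_{j-1})$. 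For $\xi$ in the band $[N_{j-1},N_j]$, the factor $\Phi_j$ supplies the decay, via Bernstein/Hoeffding-type bounds for the random residues together with a union bound over $\approx N_j$ frequencies. The other factors are bounded by $1$. The tail contributes only the trivial decay of the smoothing at scale $N_j$. The delicate point is balancing $K_j$, $M_j$ and the gaps $N_{j+1}/N_j$ so that three things hold at once: the Frostman bound with exponent $ar$, the Fourier bound with exponent $r$, and the lower bound killing power decay for every $\nu$. I would first try to choose the parameters as in Li--Liu, with $\log N_{j+1}\ge N_j$, and check that the union bound still closes.
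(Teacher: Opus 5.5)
Your reduction is correct and is exactly the paper's proof of this theorem. Taking $h(R)=R\log^{ar}(1/R)$ and $g(\xi)=\log^{-r}|\xi|$ in Theorem~\ref{thm: generalized mockenhaupt} gives $\Gamma_k\approx k^{-r(2/p-1)+2ar(1-1/p)}$, which is summable exactly when $p<1+\frac{r-1}{1+r+2ar}$. The paper carries $\epsilon$-losses in both exponents, which is harmless because the range is open. Your $L^2$ argument for $\dim_F E=0$ is also sound: the pushforward under $x\mapsto q_jx \bmod 1$ concentrates on a set of measure $\lesssim K_j/M_j$. It is more direct than the paper's argument, which goes by contradiction through an auxiliary set $\tilde E$ with $\dim_H\tilde E=2/(2+a)$.

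\textbf{The gap.} Everything beyond the arithmetic lies in producing $(E,\mu)$ with both bounds at once. In the paper this is Theorem~\ref{thm: our construction theorem}, and you do not establish it. The construction is left unspecified (``random or arithmetic'' residues, unspecified ``uniform'' stages). The global bound $|\widehat\mu(\xi)|\lesssim(\log|\xi|)^{-r}$ is explicitly deferred.

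\textbf{The quantitative step fails as written.} With $K_j\approx(\log q_j)^{2r}$, Hoeffding plus a union bound over $\approx N_j$ frequencies gives $K_j^{-1/2}\sqrt{\log q_j}=(\log q_j)^{1/2-r}$. This is not $\lesssim(\log|\xi|)^{-r}$ for $|\xi|\approx q_j$. Your own $L^2$ lower bound shows $K_j^{-1/2}$ is also a floor for $\sup|\widehat\mu|$ on $[q_j,q_jM_j]$, so there is no slack. Any loss larger than $(\log q_j)^{\epsilon}$ effectively lowers $r$ and shrinks the range of $p$.

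This is repairable in either of two ways:
\begin{itemize}
\item take $K_j\approx(\log q_j)^{2r+1}$ and $M_j\approx K_j(\log q_j)^{ar}$, which respects the correct Frostman constraint $M_j/K_j\lesssim(\log q_j)^{ar}$ once there are $K_j$ residues;
\item note that the stage-$j$ exponential sum is a trigonometric polynomial of degree $\lesssim M_j$, so the union bound costs only $\sqrt{\log\log q_j}$.
\end{itemize}
Even so, two things remain. First, you must handle the frequencies where the stage-$j$ factor has modulus close to $1$ (e.g.\ near $q_jM_j\mathbb{Z}$ when the residues sit on $\frac{1}{M_j}\mathbb{Z}$). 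There, ``the other factors are bounded by $1$'' gives nothing. Second, you must check the Frostman bound at intermediate scales for random residues.

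\textbf{How the paper avoids this.} It sidesteps randomness while using essentially your parameters. The residues are $v/p$ with $p$ prime in $(\frac12\log^r q_i,\log^r q_i]$ and $p\nmid v$, about $\log^{2r}q_i/\log\log q_i$ of them. They sit at precision $\log^{-(2+a)r}q_i$ after scaling by $q_i^{\beta(q_i)}$. The coefficients $\widehat{F_i}(k)$ are computed exactly. The divisor bound $\#\{p\in\mathcal P_i: pq_i^{\beta(q_i)}\mid k\}\lesssim \log(|k|q_i^{-\beta(q_i)})/\log\log q_i$ then gives $|\widehat{F_i}(k)|\lesssim\frac{\log\log q_i}{\log^r q_i}$ for all $0<|k|\le q_i$. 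Beyond $q_i$ the growth in $|k|$ is only logarithmic and is absorbed by $\widehat\phi(k/q_i)$, so there are no resonant frequencies to chase. Lemma~\ref{lem: bounded product lemma} and rapid growth of $(q_i)$ then control the products $G_m$.
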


This restriction theorem will be a direct consequence of  Theorem \ref{thm: generalized mockenhaupt} and the specific construction we provide in Theorem \ref{thm: our construction theorem}. We postpone the statement of the theorem about the construction of the set to Section \ref{sec: def and res}, after we introduce some necessary definitions. This construction is based on a recent work of Li and Liu \cite{LL25}; in their paper, they construct subsets of the real line with predetermined Hausdorff and Fourier dimension, supporting a measure that captures both dimensions simultaneously. Our construction involves a measure whose behaviour can be controled up to a logarithmic scale, detecting the precise level of nuance that is needed for our purposes.

\section{Definitions and the specific construction}\label{sec: def and res}

We introduce here some definitions needed to present the main construction.

Let $E \subseteq \mathbb{R}^n$, $s \in [0,n]$. The $s$-dimensional Hausdorff measure is defined as follows:
    \begin{equation*}
        \mathcal{H}^s_\delta (E) = \inf \left\{ \sum_i (\textnormal{diám}(E_i))^s, E \subseteq \bigcup_i E_i, \textnormal{diám}(E_i) < \delta \right\},
    \end{equation*}

    \begin{equation*}
        \mathcal{H}^s (E) = \lim_{\delta \rightarrow 0^+} \mathcal{H}^s_\delta (E).
    \end{equation*}
It is an important result that 
    \begin{equation*}
        \mathcal{H}^s(E)>0 \iff \exists \ \mu \in \mathcal{M}(E)/ \mu(B(x,R)) \lesssim R^s \ \forall \ x\in \mathbb{R}^n, R>0.
    \end{equation*}
    The ``only if'' part of this equivalence is known as Frostman's Lemma, while the ``if'' part is known as the Mass Distribution Principle.

    The Hausdorff Dimension is defined as
    \begin{align*}
        \dim_H(E)= \sup &\left\{ s \in [0,n]: \mathcal{H}^s(E) = \infty \right\} \\
        = \inf &\left\{ s \in [0,n]: \mathcal{H}^s(E) = 0 \right\} \\
        =\sup &\left\{s \in [0,n]: \exists \ \mu \in \mathcal{M}(E) / \mu(B(x,R)) \lesssim R^s \ \forall \ x\in \mathbb{R}^n, R>0 \right\}.
    \end{align*}

The Hausdorff dimension of a set $E$ represents the cut point between the values of $s$ for which $\mathcal{H}^s (E)$ is strictly greater than 0 and the values of $s$ for which $\mathcal{H}^s (E)$ is 0.

The Fourier dimension of $E \subseteq \mathbb{R}^n$ is defined as
\begin{align*}
      \dim_F(S) = \sup &\left\{ s \in [0,n]: \exists \ \mu \in \mathcal{M}(S)/ |\widehat{\mu}(\xi)| \lesssim |\xi|^{-s/2} \ \forall \ \xi \in \mathbb{R}^n  \right\}.
 \end{align*}
 Broadly speaking, it conveys information about the fastest possible polynomial decay  for the Fourier transforms of all measures supported on $E$. A key result regarding both dimensions is that
 \begin{equation*}
    \dim_F(E) \le \dim_H(E)
\end{equation*}    
for any $E \subseteq \mathbb{R}^n$.

We include here, for the sake of coompleteness, the definition of dimension function needed in Theorem \ref{thm: generalized mockenhaupt}. A function $h: \mathbb{R}_{\ge 0} \rightarrow \mathbb{R}_{\ge 0}$ is said to be a dimension function if it satisfies the following three conditions:
\begin{itemize}
    \item $h(0)=0.$
    \item $h$ is increasing.
    \item $h$ is right continuous.
\end{itemize}
For a given $s \in \mathbb{R}_{\ge 0}$, the Hausdorff measure $\mathcal{H}^s$ is defined with respect to the function $h(t) = t^s$, which is clearly a dimension function. The definition of dimension function allows a reasonable definition of a measure $\mathcal{H}^h$ with all the good properties of a measure. For a detailed explanation over the generalized Hausdorff measures, the reader can consult \cite{rog70}. $h$ is called a doubling function when there exists a constant $c>0$ such that
\begin{equation*}
    h(2t) \le c h(t) \ \forall t \ge 0.
\end{equation*}

We move on now to the main construction of the set satisfying the conditions in Theorem \ref{thm: main theorem}. We denote by $\|x\| = \min_{z \in \mathbb{Z}} |x-z|$ to the distance from $x$ to its nearest integer. Given $u \in \mathbb{R}_{>0}, q \in \mathbb{N}$, define the set
\begin{equation*}
    \mathcal{N}_u \left( \frac{\mathbb{Z}}{q} \right) = \left\{ x \in \mathbb{R}/ \ \| qx \| \le qu \right\}.
\end{equation*}
Intuitively, the set $\mathcal{N}_u \left( \frac{\mathbb{Z}}{q} \right)$ consists of all the balls in $\mathbb{R}$ of radius $u$ centered on the rational numbers with denominator $q$. It is possible to define these sets in higher dimensions, but we will not need to do so in the present paper. For a more thorough discussion on how important sets can be defined in terms of the sets $\mathcal{N}_u \left( \frac{\mathbb{Z}}{q} \right)$, we refer the reader to Li and Liu's paper \cite{LL25}. Just to give a classic example, recall the sets of well approximable numbers, independently defined by Jarnik \cite{jar31} and Besicovitch \cite{bes34}, which Kaufman \cite{kau81} later proved to be the first examples of explicit Salem sets in $\mathbb{R}$ of dimension $\frac{2}{\alpha}$, $\alpha >0$; they can be written as
\begin{align*}
        E(\alpha) &= \left\{ x \in \mathbb{R}/ \|qx \| \le q^{1-\alpha} \ \textnormal{for infinitely many} \ q \in \mathbb{N} \right\} \\
        &= \bigcap_{i \in \mathbb{N}} \bigcup_{i \le q} \mathcal{N}_{q^{-\alpha}} \left( \frac{\mathbb{Z}}{q} \right).
    \end{align*}

We can now properly cite Li and Liu's Theorem.

\begin{theorem}\label{thm: li and liu}
Suppose $\beta, \gamma \ge 0$ and $2\gamma + \beta \le 1$. Then there exists an increasing sequence $(q_i)_i$ in $\mathbb{R}_{>0}$ such that the set
\begin{equation*}
    E:= \left\{ \begin{array}{lcc}  \bigcap_i \bigcup_{1 \le H \le q_i^\gamma} \mathcal{N}_{q_i^{-1}} \left( \frac{\mathbb{Z}}{Hq_i^\beta}\right), \ \textnormal{if} \ 2\gamma + \beta < 1   \\ \\ 
    \bigcap_i \bigcup_{1 \le H \le q_i^\gamma, prime} \mathcal{N}_{q_i^{-1}} \left( \frac{\mathbb{Z}}{Hq_i^\beta}\right), \ \textnormal{if} \ 2\gamma + \beta = 1 \end{array} \right.
\end{equation*}
    has Hausdorff dimension $2\gamma + \beta$ and Fourier dimension $2\gamma$. Moreover there exists a finite Borel measure $\mu$ supported on 
    \begin{equation*}
        \bigcap_i \bigcup_{q_i^\gamma /2 \le H \le q_i^\gamma, prime} \mathcal{N}_{q_i^{-1}} \left( \frac{\mathbb{Z} \smallsetminus p\mathbb{Z}}{Hq_i^\beta}\right) \cap [0,1],
    \end{equation*}
    satisfying
    \begin{equation*}
        \mu (B(x,R)) \lesssim_\epsilon R^{2\gamma + \beta - \epsilon} \ \textnormal{and} \ 
        |\widehat{\mu}(\xi)| \lesssim_\epsilon |\xi|^{-\gamma+\epsilon}, \forall \epsilon >0.
    \end{equation*}
\end{theorem}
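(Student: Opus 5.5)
The plan is to prove the two dimension upper bounds by soft covering and Fourier-counting arguments, and the two lower bounds through the measure $\mu$, which we build as a Riesz-type infinite product. Throughout, $p$ in $\mathbb{Z}\smallsetminus p\mathbb{Z}$ denotes the prime $H$.

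\textbf{Construction of $\mu$.} Fix a smooth bump $\psi\ge 0$ with $\int\psi=1$ and $\operatorname{supp}\psi\subset[-1,1]$. For a large parameter $q$, set
\begin{equation*}
F_q(x)=\frac{1}{\#\mathcal{P}_q}\sum_{H\in\mathcal{P}_q} \frac{1}{Hq^\beta}\sum_{a\in\mathbb{Z}\smallsetminus H\mathbb{Z}} q\,\psi\bigl(q(x-\tfrac{a}{Hq^\beta})\bigr),
\end{equation*}
where $\mathcal{P}_q$ is the set of primes in $[q^\gamma/2,q^\gamma]$. The function $F_q$ is $1$-periodic, has mean $\approx 1$, and is supported on the required neighbourhood set.

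\textbf{Fourier coefficients of one factor.} By Poisson summation, the inner sum for a fixed $H$ has Fourier coefficients supported on $k\in Hq^\beta\mathbb{Z}$. Each such coefficient equals $\widehat\psi(k/q)$ times a Ramanujan-type factor of size $O(1)$ coming from removing $a\in H\mathbb{Z}$. Hence they are negligible for $|k|\ge q^{1+\epsilon}$. A fixed integer $k\neq 0$ with $|k|\le q^{1+\epsilon}$ is divisible by at most $O(\log q)$ of the primes $H\in\mathcal{P}_q$. Averaging over $\mathcal{P}_q$, which has $\#\mathcal{P}_q\approx q^\gamma/\log q$ elements, therefore gives
\begin{equation*}
|\widehat{F_q}(k)|\lesssim_\epsilon q^{-\gamma+\epsilon}\quad (k\neq0),
\end{equation*}
with rapid decay for $|k|\gg q^{1+\epsilon}$.

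\textbf{The product measure and its decay.} Choose $q_i$ growing super-exponentially (say $q_{i+1}\ge e^{q_i}$). Define $\mu_n=F_{q_1}\cdots F_{q_n}\,\mathbf 1_{[0,1]}\,dx$ and let $\mu$ be a weak limit. Then $\widehat{\mu_n}=\widehat{\mu_{n-1}}*\widehat{F_{q_n}}$. We split this convolution into the zero coefficient of $F_{q_n}$ and the remaining terms. The zero coefficient reproduces $\widehat{\mu_{n-1}}$. The remaining terms are bounded by $q_n^{-\gamma+\epsilon}\|\mu_{n-1}\|$ for $|\xi|\lesssim q_n^{1+\epsilon}$, and are tiny otherwise, because $\widehat{\mu_{n-1}}$ decays rapidly beyond $q_{n-1}^{1+\epsilon}\ll q_n$. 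Summing these differences, and using that $\|\mu_{n-1}\|$ and the smoothness constants of $\mu_{n-1}$ are at most $\exp(O(q_{n-1}))=q_n^{o(1)}$, we get $|\widehat\mu(\xi)|\lesssim_\epsilon|\xi|^{-\gamma+\epsilon}$. The same estimate shows that $\mu\neq0$.

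\textbf{Frostman condition.} For $q_{n}^{-1}\le R\le q_{n-1}^{-1}$ we bound the $\mu$-mass of an interval $I$ of length $R$ by $\sup F_{q_1}\cdots F_{q_{n-1}}$ times the $F_{q_n}$-mass of $I$. Within $I$, the points $a/(Hq_n^\beta)$ lie at mutual distance $\gtrsim q_n^{-2\gamma-\beta}$, since distinct fractions with denominators $\le q_n^{\gamma+\beta}$ are separated by that amount. So $I$ contains $\lesssim 1+Rq_n^{2\gamma+\beta}$ of them, each of $F_{q_n}$-mass $\approx q_n^{-2\gamma-\beta}$. This gives $\mu(I)\lesssim_\epsilon R^{2\gamma+\beta-\epsilon}$, using $2\gamma+\beta\le1$ and the super-exponential growth of $(q_i)$ to absorb the earlier factors. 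In the borderline case $2\gamma+\beta=1$ the separation bound is exactly where primality of $H$ is needed, so that coincidences among the fractions are controlled. This step also yields $\dim_H E\ge 2\gamma+\beta$ and $\dim_F E\ge 2\gamma$.

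\textbf{Upper bound on the Hausdorff dimension.} At level $i$, the set $E$ is covered by $\approx q_i^{\gamma}\cdot q_i^{\gamma+\beta}$ intervals of length $2q_i^{-1}$. This gives $\mathcal{H}^s(E)=0$ for every $s>2\gamma+\beta$.

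\textbf{Upper bound on the Fourier dimension.} Let $\nu$ be a probability measure on $E$ with $|\widehat\nu(\xi)|\lesssim|\xi|^{-s/2}$, and fix a large $i$. By pigeonhole some $H\le q_i^\gamma$ satisfies $\nu(\mathcal{N}_{q_i^{-1}}(\mathbb{Z}/Q))\ge q_i^{-\gamma}$, where $Q=Hq_i^\beta$. Smoothing the indicator and using Poisson summation,
\begin{equation*}
q_i^{-\gamma}\lesssim \frac{Q}{q_i}\sum_{|k|\lesssim q_i/Q}|\widehat\nu(kQ)|\lesssim q_i^{\beta+\gamma-1}+\max_{0<|\xi|\lesssim q_i}|\widehat\nu(\xi)|.
\end{equation*}
Since $2\gamma+\beta<1$ we have $q_i^{\beta+\gamma-1}=o(q_i^{-\gamma})$, so $|\widehat\nu(\xi)|\gtrsim q_i^{-\gamma}$ for some $|\xi|\lesssim q_i$. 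Letting $i\to\infty$ forces $s\le2\gamma$. The borderline case $2\gamma+\beta=1$ needs a further logarithmic saving from restricting to prime $H$.

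\textbf{Main obstacle.} The step I expect to be hardest is the convolution bookkeeping for $\widehat\mu$. The error term must lose only $q_n^{\epsilon}$, which requires the counting of prime divisors and the growth rate of $(q_i)$ to be coordinated carefully. The prime-modulus separation in the endpoint case $2\gamma+\beta=1$ is the second delicate point.
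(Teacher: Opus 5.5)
\textbf{Overall.} Your construction of $\mu$ and the telescoping argument for $\widehat\mu$ follow the scheme the paper adapts from Li--Liu: averaged periodic bumps over primes, a product of the $F_{q_i}$, and a growth rate for $(q_i)$ that absorbs the norms of the partial products. For the upper bound on $\dim_F E$ you take a different and more direct route. The paper's sharpness argument for its logarithmic analogue instead uses the same product construction to turn a hypothetical better-decaying measure on $E$ into a measure on an auxiliary Jarn\'ik-type set, whose Fourier exponent would then exceed its Hausdorff dimension. There are, however, gaps in two steps.

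\textbf{Upper bound on $\dim_F E$.} The last step does not work as written. From $\max_{0<|\xi|\lesssim q_i}|\widehat\nu(\xi)|\gtrsim q_i^{-\gamma}$ you cannot conclude $s\le 2\gamma$. The frequencies involved are $kQ$ with $Q=Hq_i^\beta$, and the pigeonhole may produce $H=1$, so the maximizing $\xi$ may have size only $q_i^{\beta}$. Combined with $|\widehat\nu(\xi)|\lesssim|\xi|^{-s/2}$, this gives only $s\le 2\gamma/\beta$, and nothing at all if $\beta=0$.

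The fix is not to pass to the maximum, but to insert the assumed decay into the sum. Since $s/2<1$, the average is dominated by the largest frequencies, and $\frac{Q}{q_i}\sum_{0<|k|\lesssim q_i/Q}(|k|Q)^{-s/2}\approx q_i^{-s/2}$. This gives $q_i^{-\gamma}\lesssim q_i^{\gamma+\beta-1}+q_i^{-s/2}$, which forces $s\le 2\gamma$ when $2\gamma+\beta<1$. With the extra factor $\log q_i$ from pigeonholing over primes, it also works when $2\gamma+\beta=1$. Repaired this way, your argument is simpler than the paper's: it needs no second product construction and no dimension computation for an auxiliary set, and it works for any sequence $q_i\to\infty$.

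\textbf{Frostman bound.} This step has two problems.

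First, you bound $\mu(I)$ by $\sup(F_{q_1}\cdots F_{q_{n-1}})\int_I F_{q_n}$. This silently discards the later factors $F_{q_m}$, $m>n$, whose sup norms are huge, so for the limit measure it needs proof. The paper supplies it by testing against a bump at scale $R$: $\mu(B(x,R))\le \mu_{m_0}(B(x,2R))+\|\widehat\psi\|_1\sup_\xi|\widehat\mu(\xi)-\widehat{\mu_{m_0}}(\xi)|$. The telescoped difference bounds then make the last term at most $q_{m_0}^{-1}\le R$. Your own difference estimates, summed over $m>n$ and transferred from integer to real frequencies, give the same thing. So this is an omitted step rather than a wrong one.

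Second, the counting is off by a logarithm, and this matters at your growth rate. A single bump of $F_{q_n}$ has mass $(\#\mathcal P_{q_n}Hq_n^\beta)^{-1}\approx q_n^{-2\gamma-\beta}\log q_n$, not $q_n^{-2\gamma-\beta}$. With your separation count $1+Rq_n^{2\gamma+\beta}$, this gives $\int_I F_{q_n}\lesssim (R+q_n^{-2\gamma-\beta})\log q_n$. At $R\approx q_{n-1}^{-1}$, the factor $\log q_n\ge q_{n-1}$ cannot be absorbed by $R^{-\epsilon}$. Your stated bound comes out right only because two logarithmic errors cancel.

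To actually prove it, count per denominator. The interval $I$ contains $\lesssim \#\mathcal P_{q_n}(1+Rq_n^{\gamma+\beta})$ centres, so $\int_I F_{q_n}\lesssim R+q_n^{-\gamma-\beta}$; this is the paper's count $8Rq^{\beta+\gamma}\#\mathcal P$. Use the separation count only for $R\le q_n^{-\gamma-\beta}$, where $\log q_n\lesssim_\epsilon R^{-\epsilon}$.

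\textbf{Minor slip.} For fixed $H$ the Fourier coefficients are supported on $q^\beta\mathbb Z$, not on $Hq^\beta\mathbb Z$. The removed progression contributes $-\frac1H\widehat\psi(k/q)$ on $q^\beta\mathbb Z\smallsetminus Hq^\beta\mathbb Z$. This is harmless after averaging over $H$.
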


Our intention is to modify this result in order to produce a set that satisfies the hypotheses of Theorem \ref{thm: generalized mockenhaupt}. In essence, we will need the two magnitudes that define the dimension, $\beta$ and $\gamma$, to now be functions. Let's discuss the specific functions that we will need.

We want the Fourier decay of our measure $\mu$ to be slower than any negative power function, in order to allow the Fourier dimension of our set to be zero. However, we want it to be something that we can operate with, so we want to define $\gamma(x)$ for every $x\in (0,1) \cup (1,+\infty)$ in such a way that
\begin{equation*}
    \xi^{-\gamma(\xi)} = \frac{1}{\log^r (\xi)}
\end{equation*}
for some $r>0$ and for large values of $\xi\in\mathbb{R}$. Nonetheless, we need to proceed with caution: since we are dealing with both Fourier decays and dimension functions in the same result, defining $\gamma(x)$ in this way for all $x \in \mathbb{R}_{>0}$ would result in $x^{\gamma(x)} \xrightarrow{x \rightarrow 0^+} -\infty$, which does not make sense for a dimension function. This problem does not arise when $\gamma$ is simply a constant, so we want to define $\gamma(x)$ in a way that emulates the ``symmetric'' behaviour of $x^{-\gamma}$, namely the fact that $x^{-\gamma}$ evaluated in $x^{-1}$ is exactly $x^{\gamma}$. With this in mind, we define $\gamma(x)$ on $(0,1)\cup (1,+\infty)$ as
\begin{equation}\label{eq: definition of gamma}
    \gamma(x) = \frac{r \log (|\log x|)}{|\log x|},
\end{equation}
which guarantees that
\begin{equation*}
    x^{\gamma(x)} = \left\{ \begin{array}{lcc} \log^r (x)  \ \textnormal{if} \ x > 1   \\ \\ 
    \frac{1}{\log^r(\frac{1}{x})} \ \textnormal{if} \ 0<x < 1. \end{array} \right.
\end{equation*}
It is not important to define the function in $1$, since its behaviour only matters for sufficiently large and small values of $x$.

In a similar fashion, we will define
\begin{equation}\label{eq: definition of beta}
    \beta(x) = 1-(2+a)\gamma(x),
\end{equation}
for some small $a>0$ that we will be able to choose freely. This guarantees that
\begin{equation*}
    s(x)=\beta(x)+2\gamma(x)=1-a\gamma(x),
\end{equation*}
the function that ``captures'' the Hausdorff dimension, is smaller than 1 but greater than any $0<\alpha<1$, in a way that is comparable to $\gamma$. As a consequence, we get that
\begin{equation*}
    x^{\beta(x)} = \left\{ \begin{array}{lcc} \frac{x}{\log^{(2+a)r}(x)}  \ \textnormal{if} \ x > 1   \\ \\ 
   x \log^{(2+a)r} \left( \frac{1}{x} \right) \ \textnormal{if} \ 0<x < 1, \end{array} \right.
\end{equation*}
and
\begin{equation*}
    x^{s(x)} = \left\{ \begin{array}{lcc} \frac{x}{\log^{ar}(x)}  \ \textnormal{if} \ x > 1   \\ \\ 
   x \log^{ar} \left( \frac{1}{x} \right) \ \textnormal{if} \ 0<x < 1.\end{array} \right.
\end{equation*}

After these definitions we are in a position to state our main theorem regarding the specific construction.
\begin{theorem}\label{thm: our construction theorem}
Let $r>0, a>0$. For these values, let $\gamma(x),\beta(x)$ be the functions defined in \eqref{eq: definition of gamma} and \eqref{eq: definition of beta} respectively. Then there exists an increasing sequence $(q_i)_i$ of positive real numbers such that the set
\begin{equation}\label{eq:Set E}
    E:=\bigcap_{i\in \mathbb{N}} \bigcup_{1 \le H \le q_i^{\gamma(q_i)}} \mathcal{N}_{q_i^{-1}} \left( \frac{\mathbb{Z}}{Hq_i^{\beta(q_i)}}\right)  
\end{equation}
has $\dim_H(E)=1$ and $\dim_F (E)=0$. Moreover, $E$ supports a finite Borel measure $\mu$ for which 
    \begin{equation}\label{eq: our frostman construction}
        \mu (B(x,R)) \lesssim_\epsilon R\log^{ar+\epsilon}\left( \frac{1}{R} \right) \quad  \forall R < 1/3, \forall \epsilon >0 
        \end{equation}
        and
        \begin{equation}\label{eq: our fourier construction}
        |\widehat{\mu}(\xi)| \lesssim_\epsilon \frac{1}{\log^{r-\epsilon} (|\xi|)} \quad \forall |\xi| \ge 2, \forall \epsilon >0
    \end{equation}
    hold, and the Fourier decay is optimal on $E$ (up to $\epsilon$-loss).
\end{theorem}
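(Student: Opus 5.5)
I would follow Li and Liu's proof of Theorem \ref{thm: li and liu} step by step, replacing the constant exponents by the functions $\gamma(q_i),\beta(q_i)$ and tracking logarithmic factors instead of $\epsilon$-powers.

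\emph{The measure.} At stage $i$ write $Q_i=q_i^{\beta(q_i)}$ and $M_i=q_i^{\gamma(q_i)}=\log^r q_i$. Let $\phi$ be a smooth bump supported in $[-1,1]$ with $\int\phi=1$. Set
\begin{equation*}
F_i(x)=\frac{c_i}{\#\mathcal{P}_i}\sum_{H\in\mathcal{P}_i}\sum_{a\in\mathbb{Z}\smallsetminus H\mathbb{Z}}\frac{q_i}{HQ_i}\,\phi\!\left(q_i\Big(x-\frac{a}{HQ_i}\Big)\right),
\end{equation*}
where $\mathcal{P}_i$ is the set of primes in $[M_i/2,M_i]$ and $c_i$ is a normalizing constant. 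Then $\mu$ is the weak limit of $\chi\prod_{j\le i}F_j\,dx$, with $\chi$ a bump on $[0,1]$. The Fourier coefficients of $F_i$ (as a periodic function) are supported on $Q_i\mathbb{Z}$, at frequencies $\xi=HQ_i k$. Grouping by $\xi$, one gets
\begin{equation*}
|\widehat F_i(\xi)|\lesssim \frac{\#\{H\in\mathcal P_i : \xi\in HQ_i\mathbb Z\}}{\#\mathcal P_i}\,|\widehat\phi(\xi/q_i)|,\qquad \xi\neq 0.
\end{equation*}
Since $|\xi|\le q_i^{1+o(1)}$ is the only relevant range, the number of prime divisors $H\sim M_i$ of $\xi/Q_i$ is $O(\log q_i/\log M_i)$. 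By the prime number theorem $\#\mathcal P_i\sim M_i/\log M_i$. This gives $|\widehat F_i(\xi)|\lesssim \log q_i\,/\,M_i=\log^{1-r}q_i$, which is too weak by a factor $\log q_i$. So I would run Li and Liu's more careful count: the $\xi$ in the window $|\xi|\lesssim q_i$ that are divisible by several primes $H$ are rare, and their contribution is controlled in an averaged ($L^1$ convolution) sense rather than pointwise. This should give $|\widehat F_i(\xi)|\lesssim_\epsilon M_i^{-1+\epsilon'}$ with a loss that is only $\log^{\epsilon}$. Because $q_i$ is the only scale, that loss is $\log^{\epsilon}|\xi|$.

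\emph{Decay of $\widehat\mu$.} Next, choose $q_{i+1}$ super-exponentially large compared with $q_i$, for instance $\log q_{i+1}\ge e^{q_i}$. The standard convolution argument $\widehat{\mu_{i+1}}=\widehat{\mu_i}*\widehat F_{i+1}$ then shows that $\widehat\mu(\xi)$ is controlled by $\widehat F_i$ at the scale $q_i\approx|\xi|$, up to errors from the rapid decay of $\widehat\phi$ and $\widehat\chi$. At frequencies $|\xi|\in[q_i^{1+o(1)},q_{i+1}^{\beta(q_{i+1})}]$ the decay is inherited from stage $i$, which is faster than needed. This yields \eqref{eq: our fourier construction}.

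\emph{Frostman bound.} For $R$ between consecutive scales I would compare $\mu(B(x,R))$ with the mass distribution of $F_i$. At scale $R\approx q_i^{-1}$ each interval has mass about $1/(q_i\,Q_i M_i\,\cdot\,\text{density})$, which is roughly $R\,q_i^{1-s(q_i)}\approx R\log^{ar}(1/R)$, since the intervals are spaced at about $1/(M_i^2Q_i)=q_i^{-s(q_i)}$. For intermediate $R$ (say $q_i^{-s}\le R\le q_{i-1}^{-1}$), distinct rationals $a/(HQ_i)$ with $H$ prime are $1/(M_i^2Q_i)$-separated. Counting them in an interval of length $R$ gives at most $R M_i^2Q_i$ intervals, each of mass about $1/(M_i^2Q_i)$ times the previous-level density. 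This interpolates correctly and gives \eqref{eq: our frostman construction} with the $\epsilon$ absorbing $\log\log$ losses and the prime-counting losses. The dimensions follow from these estimates. $\dim_H E\ge 1-\delta$ for every $\delta>0$ by the mass distribution principle, since $R\log^{ar+\epsilon}(1/R)\lesssim R^{1-\delta}$. $\dim_F E=0$ follows from optimality, as explained next.

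\emph{Optimality and main obstacle.} To show that no measure on $E$ decays faster than $\log^{-r-\epsilon}$, I would adapt Li and Liu's upper bound. Any $\nu$ on $E$ is concentrated near $\bigcup_{H\le M_i}\mathbb Z/(HQ_i)$ at width $q_i^{-1}$. Testing $\widehat\nu$ against frequencies $\xi=HQ_ik$, $k\le q_i/(HQ_i)$, and averaging over $H\le M_i$ shows that $\sum_{H\le M_i}\max_k|\widehat\nu(HQ_ik)|\gtrsim \nu(\mathbb R)$. Hence some $\xi\le q_i^{1+o(1)}$ has $|\widehat\nu(\xi)|\gtrsim 1/M_i=\log^{-r}q_i$. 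In particular $\dim_F E=0$. The main obstacle is the pointwise Fourier bound for $F_i$, with only a $\log^{\epsilon}$ loss rather than Li and Liu's $q^{\epsilon}$ loss. Their $q^\epsilon$ loss is fatal at this logarithmic precision, so the divisor-counting step must be redone sharply. It will likely require restricting to primes $H$, as in the endpoint case of Theorem \ref{thm: li and liu}, so that the number of relevant divisors is bounded by $\log q_i/\log M_i$. Since this is still too large by a $\log q_i$ factor, I expect the fix to come from the exclusion $a\notin H\mathbb Z$ and the Ramanujan-sum structure, which yield cancellations of size $1/H$ for each non-dividing prime.
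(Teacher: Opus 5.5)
\paragraph{The Fourier bound on $\widehat F_i$ is missing.} Your architecture matches the paper's: prime denominators $H\in\mathcal P_i$ with $a\notin H\mathbb Z$, a product of smoothed stage functions, very fast growth of $q_i$, telescoping on the Fourier side, and a Frostman count via the $1/(M_i^2Q_i)$-separation of centres. The gap is the step you flag as the main obstacle, the pointwise bound on $\widehat F_i$, and it comes from a miscount.

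Where $\widehat\phi(\xi/q_i)$ is not negligible, i.e.\ $|\xi|\le q_i$, the quotient $\xi/Q_i$ has size at most $q_i/Q_i=\log^{(2+a)r}q_i$. It is only polylogarithmic, not of size $q_i^{1+o(1)}$. If distinct $H_1,\dots,H_j\in\mathcal P_i$ all satisfy $H_lQ_i\mid\xi$, then $(M_i/2)^jQ_i\le|\xi|$, so for $q_i$ large
\[
j\le\frac{\log(|\xi|/Q_i)}{\log(M_i/2)}\le\frac{(2+a)r\log\log q_i}{r\log\log q_i-\log 2}\le 2(2+a).
\]
So only boundedly many primes divide $\xi/Q_i$, and directly
\[
|\widehat F_i(\xi)|\lesssim\frac{1}{\#\mathcal P_i}\Bigl(j+\sum_{H\in\mathcal P_i}\frac1H\Bigr)\lesssim\frac{\log M_i}{M_i}\approx\frac{\log\log q_i}{\log^r q_i}.
\]
For $|\xi|>q_i$ the count grows only like $\log(|\xi|/Q_i)/\log M_i$, and the decay of $\widehat\phi(\xi/q_i)$ absorbs this. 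This two-case bound is exactly what the paper feeds into Lemma \ref{lem: bounded product lemma}.

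Your suggested remedies would not close the gap:
\begin{itemize}
\item An averaged ($L^1$) bound on $\widehat F_i$ cannot give pointwise decay of $\widehat\mu$. The convolution step pairs $\widehat F_i$ pointwise against the $\ell^1$ weights of $\widehat\psi$.
\item The Ramanujan-sum structure gives coefficient $\frac{H-1}{H}$ for every dividing $H$, so there is no cancellation there. For non-dividing $H$ it gives $-\frac1H$, which is already $O(1/M_i)$ after averaging.
\item The exclusion $a\notin H\mathbb Z$ makes the supports of different primes disjoint, which the Frostman bound needs. It does not produce Fourier cancellation.
\end{itemize}
As written, then, the decay $\log^{-r+\epsilon}|\xi|$ is not established.

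\paragraph{Your sharpness argument is a valid alternative route.} The paper starts from a hypothetical measure with decay $\log^{-r'}$, $r'>r$. It builds from it a second measure on a set of Hausdorff dimension $\frac{2}{2+a}$ with Fourier exponent $\frac{r'}{r(2+a)}$, contradicting $\dim_F\le\dim_H$. You instead test $\nu$ directly against periodic bumps around $\mathbb Z/(HQ_i)$ and pigeonhole over $H\le M_i$. This is sound, more direct, and avoids the $\epsilon$-loss. Two points should be made explicit:
\begin{itemize}
\item The zero-frequency terms total about $\sum_{H\le M_i}\frac{HQ_i}{q_i}\,\nu(\mathbb R)\approx\log^{-ar}q_i\,\nu(\mathbb R)$. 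This is negligible precisely because $a>0$.
\item The frequency you find satisfies $Q_i\le|\xi|\le q_i^{1+\delta}$. Hence $\log|\xi|\sim\log q_i$, so $|\widehat\nu(\xi)|\gtrsim\log^{-r}|\xi|$ along a sequence $\xi\to\infty$.
\end{itemize}
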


\begin{remark}
    Conditions \eqref{eq: our frostman construction} and \eqref{eq: our fourier construction} can be restated as
    \begin{equation*}
          \mu (B(x,R)) \lesssim_\epsilon R^{\beta(R)+2\gamma(R)} \log^\epsilon \left( \frac{1}{R} \right) \quad \textnormal{and} \quad |\widehat{\mu}(\xi)| \lesssim_\epsilon |\xi|^{-\gamma(|\xi|)}\log^\epsilon (|\xi|),
    \end{equation*}
    to be seen as an analogue of the results in Theorem \ref{thm: li and liu}.
\end{remark}

The interesting thing is that, even though the sets defined in Theorem \ref{thm: our construction theorem} are outside of the hypotheses of the original STM theorem, it is still possible to restrict the Fourier transform to these sets. That is, with the set from Theorem \ref{thm: our construction theorem} we can provide a proof of Theorem \ref{thm: main theorem}.

\begin{proof}[Proof of Theorem \ref{thm: main theorem}]
    Let $r>1, a>0$, $E$ and $\mu$ as in Theorem \ref{thm: our construction theorem}.  For every $\epsilon >0$, Theorem \ref{thm: generalized mockenhaupt} guarantees a restriction theorem $R_E (p \rightarrow 2)$ if $\left(\Gamma^{(\epsilon)}_k \right)_k \in \ell_1$, where
\begin{align*}
    {\Gamma^{(\epsilon)}_k} &= \left[ \frac{1}{\log^{r-\epsilon}(2^{k-1})}  \right]^{\frac{2}{p} -1} \left[ \log^{ar+\epsilon}(2^k)  \right]^{2-\frac{2}{p}} \\
    &\lesssim \frac{1}{k^{(r-\epsilon)(\frac{2}{p}-1)}} k^{(ar+\epsilon)(2-\frac{2}{p})}.
\end{align*}
A simple calculation shows that the right hand side belongs to $\ell_1$ if and only if
\begin{equation*}
    \frac{2}{p} (r+ar) > 1+r+ 2ar+ \epsilon. 
\end{equation*}
Since this holds for any $\epsilon>0$, a sufficient condition for a restriction theorem $R_E (p \rightarrow 2)$ is that
\begin{equation*}
    \frac{2}{p} (r+ar) > 1+ r+2ar , 
\end{equation*}
which happens if and only if
\begin{equation*}
    p < 1 + \frac{r-1}{1+r+2ar}.
\end{equation*}
\end{proof}

We discuss here some subtleties that can be observed comparing STM's theorem and our Theorem \ref{thm: main theorem}. Notice that if we apply Theorem \ref{thm:STM} for a measure satisfying condition $(i)$ for every $0<\alpha<1$ and condition $(ii)$ for some fixed 
$\beta>0$, we would obtain a restriction theorem $R_E (p \rightarrow 2)$ for $1 \le p <2$, since
\begin{equation*}
    \sup_{0<\alpha<1} \frac{2(2-2\alpha+\beta)}{4(1-\alpha)+\beta} = 2,
\end{equation*}
independently of the value of $\beta>0$. That is, with any \emph{positive} Fourier decay and full Hausdorff dimension, the full range of restriction can be obtained. Our result provides an interesting extension of this fact: with a \emph{zero} dimensional polylogarithmic decay on the Fourier transform, we can still get examples of sets with the restriction property. 
Namely, for any $\tilde{p} \in (1,2)$ it is always possible to pick $r>1, a>0$ and a corresponding set $E$ as in Theorem \ref{thm: our construction theorem} such that a restriction theorem $R_E (p \rightarrow 2)$ holds for every $1 \le p < \tilde{p}$.

\section{Proof of Theorem \ref{thm: our construction theorem}}
We include here the proof of our main construction result in Theorem \ref{thm: our construction theorem}.
\begin{proof}[Proof of Theorem \ref{thm: our construction theorem}:]
The proof is structured as follows: first we will construct the measure $\mu$ and prove it has the desired Fourier decay. Then we will prove that $\mu$ satisfies the Frostman condition; the fact that $\dim_H (E)=1$ follows automatically from this. Finally, we will prove that the Fourier decay of $\mu$ is sharp; this will imply that $\dim_F (E)=0$.

The construction we present here will provide a set that is actually contained in the set described in \eqref{eq:Set E}. Namely, consider the set $E'$ defined as:

\begin{equation*}
    E'=\bigcap_{i \in \mathbb{N}} \bigcup_{\substack{\frac{1}{2} q_i^{\gamma(q_i)} \le p \le q_i^{\gamma(q_i)}\\ p \text{ prime}}} \mathcal{N}_{q_i^{-1}} \left( \frac{\mathbb{Z} \smallsetminus p\mathbb{Z}}{pq_i^{\beta(q_i)}}\right) \cap [0,1].
\end{equation*}

Our measure $\mu$ will be supported on this subset. We will refrain from pointing out every time that a constant codified by a $\lesssim$ symbol depends on $r$ or $a$, since these are fixed throughout the proof and a constant depending on them creates no problems.

In order to prove that $E$ verifies the desired properties, we will impose two types of conditions on the sequence $(q_i)_i$: that the term $q_1$ is large enough, and that the sequence increases fast enough. For the sake of clarity, the precise set of conditions on $(q_i)_i$ will be provided after all the conditions are duly brought up.

We can always consider $q_i$ such that $q_i^{\beta(q_i)} \in \mathbb{N}$ by increasing $q_i$ if necessary. Therefore, the set $\mathcal{N}_{q_i^{-1}} \left(\frac{\mathbb{Z} \smallsetminus p\mathbb{Z}} {pq_i^\beta}\right)$ is formed by intervals of radius $q_i^{-1}$ and centers 
$\frac{m}{p q_i^{\beta(q_i)}}$, where $(m:p)=1$, and the intervals are disjoint if all the $q_i$ are large enough (see condition \ref{item: L1}). Again, if all $q_i$ are large enough (see condition \ref{item: L2}), we will have that
\begin{equation*}
    q_i^{\gamma(q_i)} = \log^r (q_i), \quad \textnormal{and} \quad q_i^{\beta(q_i)} = \frac{q_i}{\log^{(2+a)r}(q_i)}.
\end{equation*}
In addition, we will have that $\log \log q_i > 1$ for every $i \in \mathbb{N}$.

Let's construct the measure $\mu$ and prove it has the desired Fourier decay. It will be enough to prove that the Fourier decay holds over the integers by virtue of the following Lemma. While this lemma is well known for polynomial decays (see \cite{wol03}), we weren't able to find a version for general decays, so we present it here.

\begin{lemma}\label{lem: decay over integers is enough}
    Let $\mu$ be a finite Borel measure over $\mathbb{T}$ such that $|\widehat{\mu}(k)| \lesssim f(|k|)$ $\forall k \in \mathbb{Z}$, where $f: \mathbb{R}_{>0} \longrightarrow \mathbb{R}_{>0}$ is a function such that
    \begin{itemize}
        \item $f$ is decreasing
        \item $f\left( \frac{\xi}{2} \right) \lesssim f\left( \xi \right)$
        \item There exist $N \in \mathbb{N}, \xi_0 >0$ such that $f(\xi) \gtrsim_{N, \xi_0} \xi^{-N} \ \forall \xi \ge \xi_0 $. 
    \end{itemize}
    Then for any $\phi \in \mathcal{S}$, the measure $\nu$ defined by
    \begin{equation*}
        d\nu = \phi d \mu 
    \end{equation*}
    satisfies that
    \begin{equation*}
        |\widehat{\nu}(\xi)| \lesssim f(|\xi|) \quad \forall \ |\xi| \ge \xi_0 .
    \end{equation*}
\end{lemma}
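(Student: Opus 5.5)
We prove the lemma with the standard convolution argument: compute $\widehat{\nu}$ as a discrete convolution of $\widehat{\mu}$ with $\widehat{\phi}$, then split the sum according to whether $k$ is close to $\xi$ or not. We view $\mu$ as a measure on $[0,1)$ (or on $\mathbb{R}$, supported on one period). Then $\widehat{\nu}(\xi)=\int e^{-2\pi i x\xi}\phi(x)\,d\mu(x)$.

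On the support of $\mu$ we expand $\phi$ in its Fourier series $\phi(x)=\sum_{k\in\mathbb{Z}} c_k e^{2\pi i k x}$. More precisely, we expand the periodization of $\phi$, or $\phi\chi$ for a smooth cutoff $\chi$ equal to $1$ near $\mathrm{supp}\,\mu\subset[0,1]$. Because $\phi$ is Schwartz, the coefficients satisfy $|c_k|\lesssim_M (1+|k|)^{-M}$ for every $M$. Since we also need non-integer $\xi$, it is cleaner to absorb the phase: set $\psi_\xi(x)=e^{-2\pi i x(\xi-n)}\phi(x)\chi(x)$, where $n$ is the integer nearest to $\xi$. Then $|\xi-n|\le 1/2$, so the Fourier coefficients $c_k(\xi)$ of the $1$-periodization of $\psi_\xi$ satisfy $|c_k(\xi)|\lesssim_M (1+|k|)^{-M}$ uniformly in $\xi$. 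This gives
\begin{equation*}
\widehat{\nu}(\xi)=\sum_{k\in\mathbb{Z}} c_k(\xi)\,\widehat{\mu}(n-k).
\end{equation*}

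Next we split the sum at $|k|\le |n|/2$.
\begin{itemize}
\item \textbf{Near terms, $|k|\le |n|/2$.} Here $|n-k|\ge |n|/2$. Since $f$ is decreasing and satisfies the doubling-type bound, $|\widehat{\mu}(n-k)|\lesssim f(|n|/2)\lesssim f(|n|)$. Using $|n|\le|\xi|+1/2$ together with the doubling property once more, $f(|n|)\lesssim f(|\xi|)$ for $|\xi|\ge\xi_0$ (with $\xi_0\ge 1$ without loss of generality). Summing the rapidly decaying $|c_k|$, this part is $\lesssim f(|\xi|)$.
\item \textbf{Far terms, $|k|>|n|/2$.} We use only the trivial bound $|\widehat{\mu}|\le\mu(\mathbb{T})$. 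This part is then $\lesssim \sum_{|k|>|n|/2}(1+|k|)^{-M}\lesssim |n|^{-M+1}\lesssim |\xi|^{-N}$ once we choose $M=N+1$. By the third hypothesis this is $\lesssim f(|\xi|)$ for $|\xi|\ge\xi_0$.
\end{itemize}
Adding the two parts gives $|\widehat{\nu}(\xi)|\lesssim f(|\xi|)$ for $|\xi|\ge\xi_0$.

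The main obstacle is the bookkeeping with non-integer $\xi$, i.e. getting coefficient bounds uniform in the fractional shift $\xi-n$. This is handled because $\psi_\xi$ ranges over a family that is bounded in every $C^M$ norm. A second point to watch is the small range of $|\xi|$ where $n$ may be $0$, which is why the lemma only asserts the bound for $|\xi|\ge\xi_0$; the remaining constants are absorbed by $f(\xi_0)>0$.
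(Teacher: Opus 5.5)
Your overall strategy is the paper's: write $\widehat{\nu}$ as a sum of $\widehat{\mu}$ against Schwartz-decaying coefficients. Where the argument of $\widehat{\mu}$ is $\gtrsim|\xi|$, use monotonicity and doubling of $f$. Elsewhere, use $|\widehat{\mu}|\le\mu(\mathbb{T})$, Schwartz tails and $f(\xi)\gtrsim\xi^{-N}$. The gap is the identity $\widehat{\nu}(\xi)=\sum_k c_k(\xi)\widehat{\mu}(n-k)$ as you set it up.

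You read $\nu$ as $\phi$ times $\mu$ placed on the single period $[0,1)$, and you insert a cutoff $\chi$ equal to $1$ near $\mathrm{supp}\,\mu\subset[0,1]$. The identity then needs the periodization $\sum_j\psi_\xi(x+j)$ to agree with $e^{-2\pi i x(\xi-n)}\phi(x)$ on $\mathrm{supp}\,\mu$. That requires that no nonzero integer translate of $\mathrm{supp}\,\chi$ meets $\mathrm{supp}\,\mu$. When $\mathrm{supp}\,\mu$ comes close to both $0$ and $1$ (e.g.\ $\mathrm{supp}\,\mu=\mathbb{T}$), no such $\chi$ exists. In fact, under your reading the lemma is false. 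Take $\mu$ to be Lebesgue measure: then $\widehat{\mu}(k)=0$ for $k\neq 0$, so $f(\xi)=(1+\xi)^{-2}$ is admissible with $N=2$. Yet for $\phi(x)=e^{-x^2}$, integration by parts gives $\widehat{\nu}(\xi)=\frac{\phi(0)-\phi(1)e^{-2\pi i\xi}}{2\pi i\xi}+O(|\xi|^{-2})$, which is $\gtrsim|\xi|^{-1}$.

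The fix is to use the reading behind the paper's formula (quoted from Wolff): $\nu=\phi\,\mu_{\mathrm{per}}$, where $\mu_{\mathrm{per}}$ is the $1$-periodic extension of $\mu$ to $\mathbb{R}$. Then no cutoff is needed. Since $e^{-2\pi i jn}=1$, you get $\widehat{\nu}(\xi)=\int_{[0,1)}e^{-2\pi i nx}\sum_j\psi_\xi(x+j)\,d\mu(x)$ with $\psi_\xi(x)=e^{-2\pi i x(\xi-n)}\phi(x)$. The Fourier coefficients of $\sum_j\psi_\xi(\cdot+j)$ are exactly $\widehat{\phi}(\xi-n+k)$. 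Your identity thus becomes the paper's $\widehat{\nu}(\xi)=\sum_m\widehat{\mu}(m)\widehat{\phi}(\xi-m)$. The uniform coefficient bound is then immediate, and rounding $\xi$ to $n$ becomes unnecessary; the paper simply splits at $|\xi-k|\gtrless|\xi|/2$.

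Alternatively, keep your cutoff and add the hypothesis $\mathrm{supp}\,\mu\subset[\delta,1-\delta]$. This is the situation in every application of the lemma in the paper.

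There is also a small slip in the near terms. Since $f$ is decreasing, getting $f(|n|)\lesssim f(|\xi|)$ requires the lower bound $|n|\ge|\xi|-\tfrac12\ge|\xi|/2$ for $|\xi|\ge 1$, not $|n|\le|\xi|+\tfrac12$.
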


\begin{proof}
    Let $|\xi| \ge \xi_0$. It is known (see \cite{wol03}) that 
    \begin{equation*}
    \widehat{\nu}(\xi) = \sum_{k \in \mathbb{Z}} \widehat{\mu}(k) \widehat{\phi}(\xi-k).
    \end{equation*}
We can therefore bound the absolute value of $\widehat{\nu}$ by
\begin{equation*}
    |\widehat{\nu}(\xi)| \le \sum_{|\xi - k|\ge |\xi|/2} |\widehat{\mu}(k)| |\widehat{\phi}(\xi-k)| + \sum_{|\xi - k| \le |\xi|/2} |\widehat{\mu}(k)| |\widehat{\phi}(\xi-k)|.
\end{equation*}
The first sum can be bounded by
\begin{align*}
    \sum_{|\xi - k|\ge |\xi|/2} |\widehat{\mu}(k)| |\widehat{\phi}(\xi-k)| &\lesssim_N \sum_{|\xi - k|\ge |\xi|/2} \| \widehat{\mu}\|_\infty \frac{1}{|\xi - k|^{N+1}} \\
    &\le 2 \mu(\mathbb{R}) \int_{|\xi|/2}^\infty \frac{1}{x^{N+1}}dx \lesssim_{N,\mu} |\xi|^{-N} \lesssim_{N, \xi_0} f(|\xi|),
\end{align*}
and the second sum can be bounded by
\begin{align*}
    \sum_{|\xi - k| \le |\xi|/2} |\widehat{\mu}(k)| |\widehat{\phi}(\xi-k)| \le f\left( \frac{|\xi|}{2} \right) \sum_{|\xi - k| \le |\xi|/2}  |\widehat{\phi}(\xi-k)|    \lesssim f(|\xi|) .
\end{align*}
This concludes with the proof of the lemma.
\end{proof}

We start with a function $\phi \in C_0^\infty (-1,1)$, $\phi \ge 0$, $\int \phi =1$. The first part of the construction is analogous to that in \cite{LL25}, so we will only make statements without many calculations. For every prime $p$, define
\begin{equation*}
    \phi_{i,p}(x) = \sum_{v \in \mathbb{Z}\smallsetminus p\mathbb{Z}} p^{-1}q_i^{1-\beta(q_i)}\phi(p^{-1}q_i^{1-\beta(q_i)}(x-v)).
\end{equation*}
$\phi_{i,p}$ is $p$-periodic, and its Fourier expansion is \begin{equation*}
    \phi_{i,p}(x)=\sum_{n \in \mathbb{Z}}\widehat{\phi}(pq_i^{\beta(q_i)-1}n)e^{2\pi i n x} -p^{-1}\sum_{m \in \mathbb{Z}}\widehat{\phi}(q_i^{\beta(q_i)-1}m)e^{2\pi i m x/p}.
\end{equation*} 
Define $\Phi_{i,p}(x) = \phi_{i,p}(p q_i^{\beta (q_i)} x)$, which is supported on $\mathcal{N}_{q_i^{-1}} \left( \frac{\mathbb{Z} \smallsetminus p \mathbb{Z}}{p q_i^{\beta(q_i)}} \right)$, and whose Fourier coefficients are
\begin{equation*}
    \widehat{\Phi_{i,p}}(k) = \left\{ \begin{array}{lcc} 
    (1-p^{-1})\widehat{\phi}(q_i^{-1}k) \quad \textnormal{if} \quad k \in pq_i^{\beta(q_i)\mathbb{Z}}  \\ 
    -p^{-1}\widehat{\phi}(q_i^{-1}k) \quad \textnormal{if} \quad k \in q_i^{\beta(q_i)}\mathbb{Z}\smallsetminus pq_i^{\beta(q_i)}\mathbb{Z}    \\ 
    0, \quad \textnormal{otherwise}.  \end{array} \right.
\end{equation*}

If we define $\mathcal{P}_i = \left\{ p \ \textnormal{prime} \in (q_i^{\gamma(q_i)}/2, q_i^{\gamma(q_i)}] \right\}$, the Prime Number Theorem guarantees that, for $q_i$ large enough (see condition \ref{item: L3}) we have that
\begin{equation*}
  \frac{1}{2} \frac{q_i^{\gamma(q_i)}}{\log (q_i^{\gamma(q_i)})} \le \# \mathcal{P}_i \le 2 \frac{q_i^{\gamma(q_i)}}{\log (q_i^{\gamma(q_i)})} = \frac{2\log^r (q_i)}{r \log \log (q_i)}.
\end{equation*}
In addition we can check that for $k \neq 0$
\begin{equation}\label{eq: divisor bound}
    \# \left\{ p \in \mathcal{P}_i / p q_i^{\beta(q_i)} | k \right\} \le
    \max \left\{ \frac{\log (|k|q_i^{-\beta(q_i)})}{\log (q_i^{\gamma(q_i)}/2)}, 0 \right\} \le \max \left\{  \frac{2\log (|k|q_i^{-\beta(q_i)})}{\log (q_i^{\gamma(q_i)})}, 0  \right\},
\end{equation}
where the last inequality holds when $q_i$ is sufficiently large (see condition \ref{item: L4})).

We analogously define 
\begin{equation*}
    F_i (x) = \left. \frac{1}{\# \mathcal{P}_i} \sum_{p \in \mathcal{P}_i} \frac{p}{p-1} \Phi_{i,p}(x) \right|_{[0,1]},
\end{equation*}
which is smooth on $\bigcup_{p \in \mathcal{P}_i} \mathcal{N}_{q_i^{-1}} \left( \frac{\mathbb{Z} \smallsetminus p \mathbb{Z}}{p q_i^{\beta(q_i)}} \right) \cap [0,1]$. It verifies that  $\widehat{F_i}(0) = 1$ and, for $k \neq 0$,
\begin{equation*}
    \widehat{F_i}(k) = \frac{1}{\# \mathcal{P}_i} \left( \# \{ p \in \mathcal{P}_i / k \in pq_i^{\beta(q_i)}\mathbb{Z}  \} - \sum_{p \in \mathcal{P}_i / k \in q_i^{\beta(q_i)}\mathbb{Z} \smallsetminus pq_i^{\beta(q_i)}\mathbb{Z}  }\frac{1}{p-1} \right) \widehat{\phi}(q_i^{-1}k).
\end{equation*}
We can use \eqref{eq: divisor bound} to bound, for $k \neq 0$,
\begin{align*}
    |\widehat{F_i}(k)| &\le 2 \frac{\log (q_i^{\gamma(q_i)})}{q_i^{\gamma(q_i)}} \left( \max \left\{  \frac{2\log \left(|k|q_i^{-\beta(q_i)}\right)}{\log (q_i^{\gamma(q_i)})}, 0  \right\}+1 \right) |\widehat{\phi}(q_i^{-1}k)| \\
    & \lesssim \left\{ \begin{array}{lcc} \frac{\log \log (q_i)}{\log^r (q_i)}|\widehat{\phi}(q_i^{-1}k)|  \ \textnormal{if} \ |k| \le q_i^{\beta(q_i)}   \\ \\ 
   \frac{\log\left(|k|q_i^{-\beta(q_i)} \right) + \log \log q_i}{\log^r (q_i)}|\widehat{\phi}(q_i^{-1}k)|. \ \textnormal{if} \ |k| \ge q_i^{\beta(q_i)}.\end{array} \right.  
\end{align*}

In order to proceed, we will need the following lemma.
\begin{lemma}\label{lem: bounded product lemma}
    Let $\psi \in C^\infty [0,1]$. Then 
    \begin{equation*}
        |\widehat{\psi F_i}(k) - \widehat{\psi}(k)| \lesssim  \| \psi \| \left\{ \begin{array}{lcc} \frac{\log \log q_i}{\log^{r} (q_i)}  \ \textnormal{if} \ |k| \le q_i   \\ \\ 
    \frac{\log \log |k|}{\log^{r}(|k|)} \ \textnormal{if} \ |k| \ge q_i. \end{array} \right.
    \end{equation*}
    where
    \begin{equation*}
        \| \psi \| = \sum_{j=-2}^{2}|\widehat{\psi}(j)|+ \| \widehat{\psi}(\cdot) \log^{t} (| \cdot |) \|_{\ell_1 (\mathbb{Z}\smallsetminus \{0\})}, \quad t= \max \{ 1,r \}
    \end{equation*}
\end{lemma}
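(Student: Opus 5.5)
We expand the product on the Fourier side. Since $\widehat{F_i}(0)=1$, we have
\begin{equation*}
    \widehat{\psi F_i}(k)-\widehat{\psi}(k)=\sum_{j\neq k}\widehat{\psi}(j)\,\widehat{F_i}(k-j),
\end{equation*}
so the task is to bound $\sum_{m\neq 0}|\widehat{\psi}(k-m)|\,|\widehat{F_i}(m)|$. Here we use the bound on $|\widehat{F_i}(m)|$ for $m\neq 0$ obtained just before the lemma, together with $|\widehat{\phi}(q_i^{-1}m)|\lesssim_N (1+|m|/q_i)^{-N}$. The latter holds because $\phi\in C_0^\infty$. Throughout we assume $\log\log q_i>1$ and that the previous estimates hold, i.e.\ $q_i$ is large.

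\textbf{Uniform bound on $\widehat{F_i}$.} We first record, for $m\neq 0$,
\begin{equation*}
    |\widehat{F_i}(m)|\lesssim \frac{\log\log q_i+\log^+(|m|/q_i)}{\log^r q_i}\,(1+|m|/q_i)^{-N}.
\end{equation*}
This follows from the earlier bound in two ranges.
\begin{itemize}
    \item For $q_i^{\beta(q_i)}\le |m|\le q_i$, we have $\log(|m|q_i^{-\beta})\le \log(q_i^{1-\beta})=(2+a)r\log\log q_i$, so this range costs only a factor $\log\log q_i$.
    \item For $|m|>q_i$, the extra term $\log(|m|/q_i)$ is killed by the Schwartz factor.
\end{itemize}
Hence $\sup_{m\neq 0}|\widehat{F_i}(m)|\lesssim \log\log q_i/\log^r q_i$. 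Moreover, for $|m|\ge q_i$ we get $|\widehat{F_i}(m)|\lesssim (\log\log q_i/\log^r q_i)(q_i/|m|)^{N}$, which is far smaller than $\log\log|m|/\log^r|m|$.

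\textbf{Case $|k|\le q_i$.} Bound trivially:
\begin{equation*}
    \sum_{m\neq0}|\widehat{\psi}(k-m)||\widehat{F_i}(m)|\le \sup_{m\ne0}|\widehat{F_i}(m)|\,\|\widehat\psi\|_{\ell^1}\lesssim \frac{\log\log q_i}{\log^r q_i}\,\|\psi\|.
\end{equation*}
Here $\|\widehat\psi\|_{\ell^1}\lesssim\|\psi\|$, since $\log^t|j|\ge \log^t 3\gtrsim1$ for $|j|\ge3$ and the terms $|j|\le2$ appear explicitly in $\|\psi\|$. This is why the norm contains the finite sum over $j=-2,\dots,2$: it avoids the degenerate values $\log 1=0$ and $\log 2<1$.

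\textbf{Case $|k|\ge q_i$.} Split the sum according to whether $|m|\ge |k|/2$ or $|m|<|k|/2$.
\begin{itemize}
    \item \emph{Terms with $|m|\ge|k|/2$.} Use the bound for large $m$ with $|m|\gtrsim|k|\ge q_i$. Since $\log\log q_i/\log^r q_i$ times a decaying factor is at most $\lesssim \log\log|k|/\log^r|k|$ (the function $x\mapsto \log\log x/\log^r x$ is decreasing for large $x$ and doubling-comparable), these terms give $\lesssim \|\widehat\psi\|_{\ell^1}\log\log|k|/\log^r|k|$. Precisely, $(q_i/|m|)^N\log^r|k|/\log^r q_i\lesssim 1$.
    \item \emph{Terms with $|m|<|k|/2$.} Then $|k-m|\ge |k|/2$, so $\log|k-m|\gtrsim\log|k|$. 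Use the sup bound on $\widehat{F_i}$:
    \begin{equation*}
        \sum_{|j|\ge|k|/2}|\widehat\psi(j)|\cdot\frac{\log\log q_i}{\log^r q_i}\le \frac{\log\log q_i}{\log^r q_i}\,\frac{\|\psi\|}{\log^t(|k|/2)}.
    \end{equation*}
    As $t\ge r$ and $t\ge1$, and $\log\log q_i\lesssim \log^{t}|k|$... in fact it suffices that $\log\log q_i/\log^r q_i\lesssim 1$ and $\log^{-t}(|k|/2)\lesssim \log^{-r}|k|\lesssim \log\log|k|/\log^r|k|$. This uses $\log\log|k|\ge\log\log q_i>1$.
\end{itemize}

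The main obstacle is only bookkeeping: making sure that the intermediate range $q_i^{\beta}\le|m|\le q_i$ costs just a factor $\log\log q_i$ (this is where the specific choice of $\beta$ enters), and that the weight $\log^t$ in $\|\psi\|$ is strong enough to transfer the tail of $\widehat\psi$ into $\log^{-r}|k|$ decay. Both checks are handled above.
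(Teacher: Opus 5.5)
Your proof is correct, but it is organized differently from the paper's.

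The paper never forms a uniform bound on $\widehat{F_i}$. It uses only $\|\widehat\phi\|_\infty\le 1$ and $|\widehat\phi(\xi)|\lesssim|\xi|^{-2}$, and splits the $l$-sum three ways in each regime: $A_1,B_1,C$ for $|k|\le q_i$, according to $|l|$ versus $q_i^{\beta(q_i)}$ and $|k-l|$ versus $|l|/2$; and $A_2,B_2,C$ for $|k|\ge q_i$, with $|k|/2$ in place of $|l|/2$. The logarithmic growth of the divisor-count factor is then absorbed piece by piece:
\begin{itemize}
\item in $B_1$, via $|l|\le 2|k-l|$, condition (L5) and the weight $\log^t$ with $t\ge 1$;
\item in $A_2$, via the monotonicity of $x/\log^r x$ and the technical Remark;
\item in $B_2$, via $|\widehat{F_i}|\le 1$ and $t\ge r$.
\end{itemize}

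You instead spend the rapid decay of $\widehat\phi$ once, up front. This gives $\sup_{m\neq 0}|\widehat{F_i}(m)|\lesssim \log\log q_i/\log^r q_i$, with the intermediate range $q_i^{\beta(q_i)}\le |m|\le q_i$ costing only $(2+a)r\log\log q_i$, plus polynomial decay beyond $q_i$. After that, $|k|\le q_i$ is a single $\ell^1$--$\ell^\infty$ estimate. The case $|k|\ge q_i$ needs only the split by $|m|$ versus $|k|/2$, with no use of the Remark and only $t\ge r$. The paper's route economizes on the decay of $\widehat\phi$, which buys nothing here since $\phi\in C_0^\infty$.

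Your split also handles correctly a step the paper gets wrong. For $|k|\ge q_i$ the paper bounds the $C$-sum by $\|\psi\|\log\log q_i/\log^r q_i$ and then asserts this is at most $\|\psi\|\log\log |k|/\log^r|k|$ by (L7). But (L7) says $\log\log x/\log^r x$ is decreasing, so that inequality runs the other way. The repair is exactly your observation: $|l|\le q_i^{\beta(q_i)}<|k|/2$ forces $|k-l|>|k|/2$, so the $\log^t$-weighted tail of $\widehat\psi$ supplies the needed factor $\log^{-r}|k|$.
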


Notice that there exists a constant $c >0$ such that
\begin{equation}\label{eq: bound for norm in lemma}
    \| \psi \| \le 5 \| \psi \|_\infty + c \| \psi'' \|_\infty \sum_{l \neq 0} \frac{\log^{t} |l|}{|l|^2},  
\end{equation}
which is a finite quantity that depends only on $\psi$ and $r$.
\begin{proof}
    For $0 \le |k| \le q_i$, we begin by noting that
    \begin{equation*}
        |\widehat{\psi F_i}(k) - \widehat{\psi}(k)| = |\sum_{l \in \mathbb{Z}} \widehat{\psi}(k-l)\widehat{F_i}(l)-\widehat{\psi}(k)| = |\sum_{l \neq 0} \widehat{\psi}(k-l)\widehat{F_i}(l)|.
    \end{equation*}
We will need to split the sum in three parts and bound each part separately. Let $A_1=\{ l \in \mathbb{Z} \smallsetminus \{ 0 \}/ |l| \ge q_i^{\beta(q_i)}, |k-l| \le \frac{|l|}{2} \}$, $B_1=\{ l \in \mathbb{Z} \smallsetminus \{ 0 \}/|l|\ge q_i^{\beta(q_i)}, |k-l| > \frac{|l|}{2} \}$ and $C = \{ l \in \mathbb{Z} \smallsetminus \{ 0 \}/|l|\le q_i^{\beta(q_i)}  \}$. Then

\begin{align*}
    \left|\sum_{l \in A_1} \widehat{\psi}(k-l) \widehat{F_i}(l)   \right| &\lesssim  \sum_{l \in A_1} |\widehat{\psi}(k-l)| \frac{\log(|l|q_i^{-\beta(q_i)}) + \log \log q_i}{\log^r (q_i)}|\widehat{\phi}(q_i^{-1}l)| \\
    &\le \sum_{l \in A_1} |\widehat{\psi}(k-l)| \frac{\log\left( 2q_i^{1-\beta(q_i)} \right) + \log \log q_i}{\log^r (q_i)} \\
    &\lesssim \| \psi \| \frac{\log \log q_i}{\log^{r}(q_i)},
\end{align*}
where in the second inequality we have used that $\|\widehat{\phi} \|_\infty = 1$ and $l \in A_1 \implies \frac{|l|}{2} \le |k| \le \frac{3}{2} |l| \implies |l| \le 2|k| \le 2q_i$, and in the third inequality holds since condition \ref{item: L4} implies that $2 \le \log^{r/2}q_i$. On the other hand,
\begin{align*}
    \left|\sum_{l \in B_1} \widehat{\psi}(k-l) \widehat{F_i}(l)   \right| &\lesssim  \sum_{l \in B_1} |\widehat{\psi}(k-l)| \frac{\log(|l|q_i^{-\beta(q_i)}) + \log \log q_i}{\log^r (q_i)}|\widehat{\phi}(q_i^{-1}l)| \\
    &\le  \sum_{l \in B_1} |\widehat{\psi}(k-l)|\frac{\log(2|k-l|)}{\log^r (q_i)}  \lesssim \| \psi \| \frac{1}{\log^{r}(q_i)},
\end{align*}
where in the second inequality we have used that $l \in B_1 \implies |l|\le 2|k-l|$ and that $q_i$ is large enough (see condition \ref{item: L5}). Finally,
\begin{align*}
    \left|\sum_{l \in C} \widehat{\psi}(k-l) \widehat{F_i}(l) \right| 
    &\lesssim \sum_{l \in C} |\widehat{\psi}(k-l)| \frac{\log \log (q_i)}{\log^r (q_i)}|\widehat{\phi}(q_i^{-1}l)| \le \| \psi \|\frac{\log \log (q_i)}{\log^r (q_i)}.
\end{align*}

Now, if $q_i \le |k|$ we will need to split the sum using $A_2=\{ l \in \mathbb{Z} \smallsetminus \{ 0 \}/ |l|\ge q_i^{\beta(q_i)}, |k-l| \le \frac{|k|}{2} \}$, $B_2=\{ l \in \mathbb{Z} \smallsetminus \{ 0 \}/ |l|\ge q_i^{\beta(q_i)}, |k-l| > \frac{|k|}{2} \}$ and $C = \{ l \in \mathbb{Z} \smallsetminus \{ 0 \}/|l|\le q_i^{\beta(q_i)}  \}$. For the first case, we will need the following technical result, which is easy to verify.

\begin{remark}\label{rem: technical remark}
    For $\alpha>e$, 
$
f(x) = x\log \left( e\alpha \frac{\log^{(2+a)r}(x)}{x} \right)
$    is increasing on $[e,\alpha]$.
\end{remark}

Then we have that
\begin{align*}
    \sum_{l \in A_2} |\widehat{\psi}(k-l)||\widehat{F_i}(l)| &\lesssim \sum_{l \in A_2} |\widehat{\psi}(k-l)| \frac{\log (|l|q_i^{-\beta(q_i)}) + \log \log q_i}{\log^r (q_i)} \frac{q_i}{|l|} \frac{q_i}{|l|} \\ 
    &\le \sum_{l \in A_2} |\widehat{\psi}(k-l)| \frac{\log (|l| q_i^{-\beta(q_i)}) + \log \log q_i}{\log^r (|k|)} \frac{|k|}{|l|} \frac{q_i}{|l|} \\
    &\lesssim \sum_{l \in A_2} |\widehat{\psi}(k-l)| \frac{q_i \log (e |k| q_i^{-\beta(q_i)}) + q_i \log \log q_i }{ |l| \log^r (|k|)}  \\
    &\le \sum_{l \in A_2} |\widehat{\psi}(k-l)| \frac{|k|\log (e \log^{(2+a)r}(|k|)) + |k| \log \log |k|}{|l| \log^r (|k|)} \\
    &\lesssim \| \psi \| \frac{\log \log (|k|)}{\log^r (|k|)} 
\end{align*}
where in the first inequality we have used that there is a constant $c>0$ such that $|\widehat{\phi}(\xi)| \le c |\xi|^{-2}$, the second inequality holds if $q_i$ is sufficiently large (see condition \ref{item: L6}), in the third inequality we have used that $l \in A \implies \frac{|k|}{2} \le |l| \le \frac{3}{2} |k|$, in the fourth inequality we have invoked the previous Remark, and the last inequality holds if $q_i$ is large enough (see condition \ref{item: L7}).

For $l \in B_2$, we can use that $|\widehat{F_i}(l)|\le 1$ to obtain that
\begin{align*}
    \sum_{l \in B_2
    }|\widehat{\psi}(k-l)| |\widehat{F_i}(l)| &\le \sum_{l \in B_2} |\widehat{\psi}(k-l)| \frac{\log^{r}(|k|)}{\log^{r}(|k|)} \\
    &\le \sum_{l \in B_2} |\widehat{\psi}(k-l)| |\log^{r} (2|k-l|)| \frac{1}{\log^{r}(|k|)} \\
    &\lesssim \| \psi \| \frac{1}{\log^{r} (|k|)}.
\end{align*}
Finally,
\begin{equation*}
      \left|\sum_{l \in C} \widehat{\psi}(k-l) \widehat{F_i}(l)  \right | 
    \lesssim |\sum_{l \in C} \widehat{\psi}(k-l) \frac{\log \log (q_i)}{\log^r (q_i)}|\widehat{\phi}(q_i^{-1}l)| \le \| \psi \|\frac{\log \log (|k|)}{\log^r (|k|)},
\end{equation*}
where the last inequality holds if $q_i$ is large enough (see condition \ref{item: L7}).

\end{proof}

After proving this, we can define the sequence of functions
$G_m$ as
\begin{equation}\label{eq: definition of Gm}
    G_0 = \chi_{[0,1]}, \quad G_m = \prod_{i=0}^m F_i.
\end{equation}
We will need to bound $\| G \|$. Recalling \eqref{eq: bound for norm in lemma} and noticing that
\begin{equation*}
    G''_m = \sum_{j=1}^m F''_j \prod_{{i = 1}\atop{i \neq j}}^{m} F_i + \sum_{j_1, j_2 = 1 \atop j_1 \neq j_2}^m F'_{j_1} F'_{j_2} \prod_{i=1 \atop i \neq j_1, j_2}^m F_i ,
\end{equation*}
this amounts to bound $\| F \|_\infty, \|F'\|_\infty $ and $\| F'' \|_\infty $. A straightforward calculation shows that 
\begin{align*}
    \| F_i \|_\infty &\lesssim_r  \log^{ar}(q_i) \log \log (q_i)  \| \phi \|_\infty , \\
    \| F'_i \|_\infty &\lesssim_r q_i \log^{ar}(q_i) \log \log (q_i)  \| \phi' \|_\infty , \\
    \| F''_i \|_\infty &\lesssim_r q^2_i\log^{ar}(q_i) \log \log (q_i)  \| \phi'' \|_\infty ,
\end{align*}
and as such we get that
\begin{align}
    \| G_m \| &\le  5\| G_m \|_\infty +  c_r\| G''_m \|_\infty \nonumber \\
    &\le  c_r \left[ \prod_{i=1}^m \| F_i \|_\infty + \sum_{j=1}^m \| F''_j \|_\infty \prod_{{i = 1}\atop{i \neq j}}^{m} \| F_i \|_\infty + \sum_{j_1, j_2 = 1 \atop j_1 \neq j_2}^m \| F'_{j_1} \|_\infty \| F'_{j_2} \|_\infty \prod_{i=1 \atop i \neq j_1, j_2}^m \| F_i \|_\infty \right] \nonumber \\
    &\le c_{r,\phi, m} \left[ 1+ \sum_{1 = j_1, j_2}^m q_{j_1} q_{j_2} \right]  \prod_{i=1}^m \log^{ar} (q_i) \log \log (q_i). \label{eq: bound for the norm of Gm}
\end{align}
This bound depends only on $r, a, \phi, m$ and the first $m$ terms of the sequence. As such, the sequence can be constructed to grow fast enough (see condition \ref{item: F1}) so that it guarantees
\begin{equation*}
    \| G_m \| \le \log \log (q_{m+1}).
\end{equation*}

Along with Lemma \ref{lem: bounded product lemma}, this yields that
\begin{equation}\label{eq: almost perfect bound on the difference}
    | \widehat{G_{m+1}}(k) - \widehat{G_m}(k)  | \lesssim  \left\{ \begin{array}{lcc} \frac{\log^2 \log (q_{m+1})}{\log^{r} (q_{m+1})}  \ \textnormal{if} \ |k| \le q_{m+1}   \\ \\ 
     \frac{\log \log (q_{m+1}) \log \log |k|}{\log^{r}(|k|)} \ \textnormal{if} \ |k| \ge q_{m+1}. \end{array} \right.
\end{equation}
Notice that the constant in \eqref{eq: almost perfect bound on the difference} does not depend on $m$, since the constant in \eqref{eq: bound for the norm of Gm} was absorbed by the choice of $q_{m+1}$.

Define $\mu_m = G_m \mathcal{L}$ (a multiple of the Lebesgue measure). Noting that $\widehat{G_1}(0) = \widehat{F_1}(0)$ = 1, we get that
\begin{equation}\label{eq: condition for positive mass}
    |\widehat{G_{m+1}}(0)-1| \le \sum_{i=1}^m |\widehat{G_{i+1}}(0)-\widehat{G_i}(0)| 
    \le c \sum_{i=1}^\infty \frac{\log^2 \log (q_{i+1})}{\log^{r}(q_{i+1})}.    
\end{equation}
If $(q_i)_i$ increases fast enough (see condition \ref{item: F2}) we can guarantee that the RHS in \eqref{eq: condition for positive mass} is less than 1/2. This implies that for every $m \in \mathbb{N}$, $\mu_m (\mathbb{R}) \le 3/2$, and therefore there exists a weak limit $\mu$ for (a subsequence of) $(\mu_m)_m$ (see, for example, Corollary 21.19 in \cite{schi17}). Taking the limit on $m$ in \eqref{eq: condition for positive mass}, we can guarantee that $\widehat{\mu}(0) >0$ and $\mu$ is not the zero measure.  It is clear that 
\begin{equation*}
    \textnormal{supp} (\mu) \subseteq \limsup_{m} \textnormal{supp} (G_m) = E'.
\end{equation*}

We now proceed to analyze the decay of $\widehat{\mu}$. Let $k \notin \{ -1,0,1 \}$ and $m$ such that $q_{m+1} \ge |k| $. We then have that
\begin{align*}
    |\widehat{G_{m+1} }(k)| &= |\widehat{G_{m+1} }(k) - \widehat{G_0}(k) | \\
    &\le \sum_{i=0}^m |\widehat{G_{i+1} }(k) - \widehat{G_i}(k)  | \\
    &= \sum_{i/ q_{i+1} \le |k| } |\widehat{G_{i+1} }(k) - \widehat{G_i}(k)  |
    + \sum_{i / q_{i+1}\ge |k| } |\widehat{G_{i+1} }(k) - \widehat{G_i}(k)  | \\
    &\lesssim \sum_{i/ q_{i+1} \le |k| } \frac{\log \log q_{i+1} \log \log |k|}{\log^{r} (|k|)}
    + \sum_{i/ q_{i+1} \ge |k| } \frac{\log^2 \log (q_{i+1} )}{\log^{r} (q_{i+1} )} \\
    &\le   \frac{1}{\log^{r-\epsilon}(|k|)} \left( \sum_{i/ q_{i+1} \le |k| } \frac{\log^2 \log |k|}{\log^{\epsilon} |k|}   +\sum_{i/ q_{i+1} \ge |k| } \frac{\log^2 \log q_{i+1}}{\log^{\epsilon}(q_{i+1} )}  \right) \\
    & \lesssim_\epsilon \frac{1}{\log^{r-\epsilon}(|k|)} \left( \sum_{i/ q_{i+1} \le |k| } \frac{1}{\log^{\epsilon/2} |k|}   +\sum_{i/ q_{i+1} \ge |k| } \frac{1}{\log^{\epsilon/2}(q_{i+1} )}  \right)\\
    &\lesssim_\epsilon \frac{1}{\log^{r-\epsilon}(|k|)}  \sum_{i \in \mathbb{N}} \frac{1}{\log^{\epsilon/2}(q_{i+1})},
\end{align*}
where in the fourth inequality we have used that there exists some constant $c_\epsilon$ such that $\log \log x \le c_\epsilon \log^{\epsilon/4}(x)$ for every $x>e^e$. 

A fast enough increase of $(q_i)_i$ (see condition \ref{item: F3}) guarantees that 
\begin{equation*}
    |\widehat{G_{m+1} }(k)| \lesssim_\epsilon \frac{1}{\log^{r-\epsilon}(|k|)}.
\end{equation*}
Taking limit on $m$ gives us the desired decay for $\widehat{\mu}$ over the integers.

We now want to prove the Frostman condition. Fix $R<1/3$. Our first task is noticing that this can be reduced to proving
\begin{equation}\label{eq: final reduction to frostman on steps}
    \mu_{m_0}(B(x,2R)) \lesssim_{\epsilon} 2R\log^{ar+\epsilon}\left( \frac{1}{2R} \right)
\end{equation}
for a suitable $m_0$, depending on $R$, that will be specified later. 

Indeed, suppose that \eqref{eq: final reduction to frostman on steps} holds for some $m_0$. Let $\psi$ be a fixed $C^\infty$ function such that 
\begin{equation*}
    \psi (x) \left\{ \begin{array}{lcc}  = 1 \quad \textnormal{if} \quad |x| \le 1 \\
     \in (0,1) \quad \textnormal{if} \quad 1 < |x| < 2 \\
     = 0 \quad \textnormal{otherwise}
    \end{array} \right.
\end{equation*}
and notice that, for any measure $\nu$, we have that
\begin{equation*}
    \int \psi \left( 2\frac{x-y}{R} \right) d\nu(y) \le \nu(B(x,R)) \le \int \psi \left( \frac{x-y}{R} \right) d\nu(y).
\end{equation*}
Applying this to $\mu_{m_0}$ we get that
\begin{align}
    \mu_{m_0}(B(x,2R)) &\ge \int \psi \left( \frac{x-y}{R} \right) G_{m_0}(y) dy \nonumber \\
    &= \int R \widehat{\psi}(R \xi) e^{2\pi i \xi x} \widehat{G_{m_0}}(\xi)d\xi. \label{eq: first transformed integral}
\end{align}
Similarly, we get that 
\begin{equation}
    \mu(B(x,R)) \le \int R \widehat{\psi}(R \xi) e^{2\pi i \xi x} \widehat{\mu}(\xi)d\xi \label{eq: second transformed integral}.
\end{equation}
Notice that, since $q_i$ was chosen large enough (see condition \ref{item: L7}), \eqref{eq: almost perfect bound on the difference} implies that
\begin{equation*}
    | \widehat{G_{m+1}}(k) - \widehat{G_m}(k)  | \lesssim   \frac{\log^2 \log (q_{m+1})}{\log^{r} (q_{m+1})} \quad \forall k \in \mathbb{Z}.
\end{equation*}
Since $G_{m+1}-G_m$ is supported on a compact subset of $(0,1)$, multiplying by a suitable function in $C^\infty([0,1])$ and invoking Lemma \ref{lem: decay over integers is enough} we get that
\begin{equation*}
\| \widehat{G_{m+1}} - \widehat{G_m}  \|_\infty \lesssim   \frac{\log^2 \log (q_{m+1})}{\log^{r} (q_{m+1})}.
\end{equation*}

We can use this along \eqref{eq: first transformed integral} and \eqref{eq: second transformed integral} to bound
\begin{align}
     \mu(B(x,R)) - \mu_{m_0}(B(x,2R))  &\le \int R |\widehat{\psi}(R \xi)| |\widehat{\mu}(\xi) - \widehat{G_{m_0}}(\xi) |d\xi \nonumber \\
    &\le \int R |\widehat{\psi}(R \xi)| \sum_{m=m_0}^\infty|\widehat{G_{m+1}}(\xi) - \widehat{G_{m}}(\xi) |d\xi \nonumber\\
    &\le c \sum_{m=m_0}^\infty \frac{\log^2 \log q_{m+1}}{\log^{r}(q_{m+1})} \| \widehat{\psi} \|_1 \le q_{m_0}^{-1} \label{eq: mu and mu0 are close}
\end{align}
where in the second inequality we have used that $\mu$ is the weak limit of $\mu_m$, and the fourth inequality holds if $(q_i)_i$ increases fast enough (see condition \ref{item: F4}). If $m_0$ is large enough so that $q_{m_0}^{-1} \le R $, \eqref{eq: mu and mu0 are close} implies that
\begin{align*}
    \mu(B(x,R)) &\lesssim_{\epsilon} R + 2R \log^{ar+\epsilon}\left( \frac{1}{2R} \right) \\
    &\le \left( \frac{1}{\log^{ar+\epsilon}\left( \frac{3}{2} \right)} +2 \right)R \log^{ar+\epsilon}\left( \frac{1}{2R} \right) \\
    &\lesssim_{\epsilon} R \log^{ar+\epsilon}\left( \frac{1}{R} \right),
\end{align*}
where in the second inequality we have used that $2R < 2/3$.

 Having done this, we now proceed proceed to prove that \eqref{eq: final reduction to frostman on steps} holds for some $m_0$ such that $q_{m_0} \ge R^{-1}$. We first need to bound $|G_{m}(y)|$. Recalling the definitions of $\Phi_{i,p}, F_i$ and $G_{m_0}$, let us prove that $\Phi_{i,p}$ and $\Phi_{i,\Tilde{p}}$ have disjoint supports for $p \neq \tilde{p}$. For this, it suffices to prove that
\begin{equation*}
    B\left( \frac{v}{p q_i^{\beta(q_i)} }, q_i^{-1} \right) \cap B\left( \frac{w}{\Tilde{p}q_i^{\beta(q_i)}}, q_i^{-1}  \right) = \varnothing \quad \forall v \in \mathbb{Z} \smallsetminus p\mathbb{Z}, w \in \mathbb{Z} \smallsetminus \Tilde{p}\mathbb{Z}.
\end{equation*}
This is easily checked by noticing that the distance between the centers of any two such balls is
\begin{align}
    \left| \frac{v}{pq_i^{\beta(q_i)} } -\frac{w}{\Tilde{p}q_i^{\beta(q_i)} } \right| &= \frac{1}{p\Tilde{p}q_i^{\beta(q_i)} } |\tilde{p}v-pw| \ge \frac{1}{p\Tilde{p}q_i^{\beta(q_i)} } \label{eq: distance between centers}\\ &\ge q_i^{-2\gamma(q_i)-\beta(q_i) } = \frac{\log^{ar}(q_i)}{q_i} = q_i^{-s(q_i)} > 2q_i^{-1}, \nonumber
\end{align}
where the first inequality is due to $p \nmid v$ and $\tilde{p} \nmid w$, and the last inequality holds for every $i \in \mathbb{N}$ when $q_1$ is chosen to be large enough (see condition \ref{item: L8}).

Once that we have proven that the supports of $\Phi_{i,p}$ and $\Phi_{i,\tilde{p}} $ are disjoint, we can use this to bound $|F_i (y)|$:
\begin{align*}
    |F_i (y)| &\le \frac{1}{\# \mathcal{P}_i} \sum_{p \in \mathcal{P}_i} \frac{p}{p-1}|\Phi_{i,p}(y)| \le \frac{2}{\# \mathcal{P}_i} \max_{p \in \mathcal{P}_i} \| \Phi_{i,p} \|_\infty \chi_{\bigcup_{p \in \mathcal{P}_i}  \textnormal{Supp}(\Phi_{i,p})} (y) \\
    &\le \frac{2 \|\phi\|_\infty q_i^{1-\gamma(q_i)-\beta(q_i) } }{\# \mathcal{P}_i} \chi_{\bigcup_{p \in \mathcal{P}_i} \mathcal{N}_{q_i^{-1}} \left( \frac{\mathbb{Z} \smallsetminus p\mathbb{Z} }{p q_i^{\beta(q_i)}} \right) }(y).
\end{align*}
Finally, this guarantees that
\begin{equation*}
    |G_m (y)| \le \prod_{i=1}^m |F_i (y)| \le C_{\phi, m} \prod_{i=1}^m \frac{q_i^{1-\gamma(q_i)-\beta(q_i)} }{\# \mathcal{P}_i } \chi_{\bigcap_{i=1}^m  \bigcup_{p \in \mathcal{P}_i} \mathcal{N}_{q_i^{-1}} \left( \frac{\mathbb{Z} \smallsetminus p\mathbb{Z} }{p q_i^{\beta(q_i)}} \right) }(y).
\end{equation*}
We can use this to bound
\begin{align}
    &\int_{B(x,2R)} G_{m}(y)dy \nonumber \\ &\quad \le C_{\phi, m} \prod_{i=1}^m \frac{q_i^{1-\gamma(q_i)-\beta(q_i)} }{\# \mathcal{P}_i } q_m^{-1} \# \{ q_m^{-1}-\textnormal{intervals that intersect} \ B(x,2R) \}; \label{eq: useful bound for the integral of Gm}
\end{align}
the reduction to intervals of radius $q_m^{-1}$ amounts to the inclusion
\begin{equation*}
    \bigcap_{i=1}^m  \bigcup_{p \in \mathcal{P}_i} \mathcal{N}_{q_i^{-1}} \left( \frac{\mathbb{Z} \smallsetminus p\mathbb{Z} }{p q_i^{\beta(q_i)}} \right) \subseteq \bigcup_{p \in \mathcal{P}_m} \mathcal{N}_{q_m^{-1}} \left( \frac{\mathbb{Z} \smallsetminus p\mathbb{Z} }{p q_m^{\beta(q_m)}} \right).
\end{equation*}

Now in order to get the desired bound, we work with $m=m_0$ such that
\begin{equation*}
    q_{m_0}^{-s(q_{m_0})} \le R < q_{m_0 -1}^{-s(q_{m_0 -1})}.
\end{equation*}
Since this $m_0$ clearly satisfies that $q_{m_0}^{-1} < R $, it is a valid index for deducing the Frostman condition from \eqref{eq: final reduction to frostman on steps}. First, notice that since the centers of the $q_{m_0}-$intervals are precisely $$\bigcup_{p \in \mathcal{P}_{m_0} } \frac{\mathbb{Z}\smallsetminus p\mathbb{Z}}{pq_{m_0}^{\beta(q_{m_0})}},$$ the amount of such intervals included in $(x-2R,x+2R)$ is at most
\begin{equation*}
    8R q_{m_0}^{\beta(q_{m_0}) + \gamma(q_{m_0})} \# \mathcal{P}_{m_0},
\end{equation*}
and we can combine this with \eqref{eq: useful bound for the integral of Gm} to get that
\begin{align}
    \int_{B(x,2R)} G_{m_0}(y)dy &\le C_{\phi,m_0} \left( \prod_{i=1}^{m_0} \frac{q_i^{1-\gamma(q_i)-\beta(q_i)}}{\# \mathcal{P}_i} \right) R q_{m_0}^{\beta(q_{m_0})+\gamma(q_{m_0})-1} \# \mathcal{P}_{m_0} \nonumber\\
    &= C_{\phi,m_0} \left( \prod_{i=1}^{m_0-1} \frac{q_i^{1-\gamma(q_i)-\beta(q_i)}}{\# \mathcal{P}_i}\right) R . \label{eq: first of the two bounds for combination}
\end{align}
Now, for a given $\epsilon>0$, fix $\tilde{\epsilon}= \frac{\epsilon}{ar}$, and rewrite \eqref{eq: first of the two bounds for combination} as
\begin{equation*}
    \underbrace{C_{\phi, m_0} \left( \prod_{i=1}^{m_0-1} \frac{q_i^{1-\gamma(q_i)-\beta(q_i)}}{\# \mathcal{P}_i}\right) R^{(1+\tilde{\epsilon})a\gamma(R)}}_{(I)} \underbrace{R^{1-(1+\tilde{\epsilon})a\gamma(R)}}_{(II)}.
\end{equation*}
Since $R<1/3$ implies that
\begin{equation*}
    (II) = R\log^{ar+\epsilon}\left( \frac{1}{R} \right) \le 
    \frac{1}{2} \left[ 1+ \frac{\log 2}{\log \left( \frac{3}{2} \right)} \right]^{ar+\epsilon} 2R\log^{ar+\epsilon}\left( \frac{1}{2R} \right),
\end{equation*}
we only need to bound $(I)$ by a constant depending on $\epsilon$. In order to do this, we will first analyse only the factors of $(I)$ which depend solely on $q_{m_0 -1}$ and $R$. But since $x^{\gamma(x)}$ is increasing for $0<x<1$ and $R< q_{m_0 -1}^{-s(q_{m_0 -1})}$, we get that these factors can be controled by 
\begin{align*}
    &\frac{q_{m_0 -1}^{1-\gamma(q_{m_0 -1}) -\beta(q_{m_0 -1})  }}{\# \mathcal{P}_{m_0 -1}} R^{(1+\tilde{\epsilon})a\gamma(R)} \\
    &\quad \le 
    2 q_{m_0 -1}^{a\gamma(q_{m_0 -1})} \log \left(q_{m_0 -1}^{\gamma (q_{m_0 -1})} \right) \left[ \left(q_{m_0 -1}^{-s(q_{m_0 -1})} \right)^{\gamma \left(  q_{m_0 -1}^{-s(q_{m_0 -1})} \right)}   \right]^{(1+\tilde{\epsilon})a} \\
    &\quad = 2\log^{ar}(q_{m_0 -1}) r \log \log (q_{m_0 -1}) \frac{1}{\log^{(1+\tilde{\epsilon})ar} \left( q_{m_0 -1}^{s(q_{m_0 -1})} \right)} \\
    &\quad = \frac{2\log^{ar}(q_{m_0 -1}) r \log \log (q_{m_0 -1})}{\left( \log q_{m_0 -1} -ar \log \log q_{m_0 -1} \right)^{ar+\epsilon}} 
    \le \frac{2^{1+ar+\epsilon} r \log \log (q_{m_0 -1})}{ \log^{\epsilon} q_{m_0 -1} },
\end{align*}
where the last inequality holds if $q_1$ is large enough (see condition \ref{item: L9}). 
If $(q_i)_i$ increases fast enough (see condition \ref{item: F5}) we can guarantee that
\begin{equation}\label{eq: bounded by a constant on epsilon}
    (I) \le C_{\phi, m_0} \left( \prod_{i=1}^{m_0-2} \frac{q_i^{1-\gamma(q_i)-\beta(q_i)}}{\# \mathcal{P}_i}\right) \frac{2^{1+ar+\epsilon} r \log \log (q_{m_0 -1})}{ \log^{\epsilon} q_{m_0 -1} } \lesssim C_\epsilon ,
\end{equation}
as desired. 

This proves that, under all the conditions that we have imposed on the sequence $(q_i)_i$, the measure $\mu$ that we have constructed satisfies the Frostman condition and has the desired Fourier decay over the integers. A simple application of Lemma \ref{lem: decay over integers is enough} yields that the Frostman condition and desired Fourier decay hold as stated in the theorem (possibly for a measure different than $\mu$, but supported on $E$ nonetheless).  It is now possible to construct an explicit sequence $(q_i)_i$ which in turn can be used to construct the set $E$, by constructing any sequence which verifies all of the conditions that can be seen explicitly stated in the appendix. We would like to point out that the sequence $(q_i)_i$ does not depend on $\epsilon$.

Finally, we will prove the sharpness of the Fourier decay. Let $r'>r$ and $\gamma'(x) = \frac{r'}{r}\gamma(x) = r' \frac{\log | \log (x) |}{|\log (x) |}$. Suppose there exists some probability measure $\mu$ supported on $E$ such that $\widehat{\mu}(\xi) \lesssim |\xi|^{-\gamma' (|\xi|)} = \frac{1}{\log^{r'}(|\xi|)}$ for $|\xi| \ge 2$. Then there exists a subsequence $(q_{i_j})_j$ of $(q_i)_i$ and some $\nu$ supported in
\begin{equation*}
    \Tilde{E} = \bigcap_{j = 1}^\infty \bigcup_{1 \le H \le q_{i_j}^{\gamma(q_{i_j})}} \mathcal{N}_{2 q_{i_j}^{-1+\beta(q_{i_j})}} \left( \frac{\mathbb{Z}}{H} \right)
\end{equation*}
such that 
\begin{equation*}
    |\nu (\xi)| \lesssim_\epsilon |\xi|^{-\frac{r'}{r(2+a)}+\epsilon} \quad \forall \epsilon>0.
\end{equation*}
This constitutes a contradiction, since
\begin{equation*}
    q_{i_j}^{-1+\beta(q_{i_j})} = q_{i_j}^{-(2+a)\gamma(q_{i_j})} = \left( q_{i_j}^{\gamma(q_{i_j})} \right)^{-(2+a)}
\end{equation*}
implies that $\dim_H (\Tilde{E}) = \frac{2}{2+a}$ via standard arguments. This proof will be fairly analogous to the proof of $\mu$'s Fourier decay. Thus, we will take the liberty of omitting some details.

Multiplying by a suitable smooth function and applying Lemma \ref{lem: decay over integers is enough} if necessary, we can assume $\textnormal{supp}(\mu)\subseteq (0,1)$ so that there is a $\delta >0$ such that $\textnormal{supp}(\mu)\subseteq [\delta,1-\delta]$. Consider only $q_i$ such that $q_i^{-1}<\delta$. Take $\phi \in C^{\infty}[-1,1]$, nonnegative, $\int \phi =1$. Define
\begin{equation*}
    \phi_i (x) = q_i \phi (q_i x), \quad \textnormal{supp}(\phi_i) \subseteq [-q_i^{-1}, q_i^{-1}],
\end{equation*}
so that supp$(\mu \ast \phi_i) \subseteq [\delta - q_i^{-1}, 1- \delta + q_i^{-1}] \subseteq [0,1]$. Define
\begin{equation*}
    F_i (x) = \left. \sum_{v \in \mathbb{Z}} q_i^{- \beta(q_i)} \mu \ast \phi_i \left( q_i^{-\beta(q_i)}(x-v) \right) \right|_{[0,1]    }
\end{equation*}
and note that indeed 
$$supp(F_i) \subseteq \bigcup_{1 \le H \le q_{i_j}^{\gamma(q_{i_j})}} \mathcal{N}_{2 q_{i_j}^{-1+\beta(q_{i_j})}} \left( \frac{\mathbb{Z}}{H} \right). $$

Since $F_i$ is (the restriction to $[0,1] $ of) the periodization of $q_i^{-\beta(q_i)} \mu \ast \phi_i (q_i^{-\beta(q_i)}\cdot ) $ we can bound its Fourier coefficients for $k \neq 0$ as
\begin{equation}\label{eq: Fourier control for F_i the second time}
    |\widehat{F_i}(k)| = |\widehat{\mu \ast \phi_i}(q_i^{\beta(q_i)}k)  | \lesssim  \left( q_i^{\beta(q_i) }|k| \right)^{- \gamma' \left( q_i^{\beta(q_i) } |k|\right)} |\widehat{\phi}(q_i^{-1+\beta(q_i)}k)|
\end{equation}
if $q_i$ is large enough so that $q_i^{\beta(q_i)} \ge 2$; while
\begin{equation*}
    \widehat{F_i}(0) = 1.
\end{equation*}

We will now need to prove an analogue of Lemma \ref{lem: bounded product lemma}
\begin{lemma}\label{lem: another bounded product lemma}
    Suppose $\psi \in C^\infty([0,1])$. Then
\begin{equation*}
    |\widehat{\psi F_i}(k)- \widehat{\psi}(k)| \lesssim \| \psi \| 
    \left\{ \begin{array}{lcc}  \frac{1}{\log^{r'}(q_i)}  \ \textnormal{if} \ |k| \le q_i^{1-\beta(q_i)}   \\ \\ 
    |k|^{-\frac{r'}{r(2+a)}} \ \textnormal{if} \ |k| \ge q_i^{1-\beta(q_i)}. \end{array} \right.
\end{equation*}
where $\| \psi \| = |\widehat{\psi}(0)|+\sum_{l \in \mathbb{Z}\smallsetminus 0} | \widehat{\psi}(l)| |l|^{\frac{r'}{r(2+a)}}$.
\end{lemma}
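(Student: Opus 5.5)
The plan is to mimic the proof of Lemma \ref{lem: bounded product lemma}. Write
\[
\widehat{\psi F_i}(k)-\widehat{\psi}(k)=\sum_{l\neq 0}\widehat{\psi}(k-l)\widehat{F_i}(l),
\]
which uses $\widehat{F_i}(0)=1$. Set $\kappa=\frac{r'}{r(2+a)}$ and $Q_i=q_i^{1-\beta(q_i)}=\log^{(2+a)r}(q_i)$. The key input is \eqref{eq: Fourier control for F_i the second time}. Since $x^{-\gamma'(x)}=\log^{-r'}(x)$ and $q_i^{\beta(q_i)}|l|\ge q_i^{\beta(q_i)}\ge q_i^{1/2}$ for large $q_i$, it gives, for $l\neq 0$,
\[
|\widehat{F_i}(l)|\lesssim \frac{1}{\log^{r'}(q_i)}\,|\widehat\phi(l/Q_i)| = Q_i^{-\kappa}\,|\widehat\phi(l/Q_i)|.
\]
So the Fourier coefficients of $F_i$ are uniformly of size $Q_i^{-\kappa}$, and they decay rapidly once $|l|\gg Q_i$. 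The structure is now exactly that of a measure with power decay $Q_i^{-\kappa}$ at scale $Q_i$, and this explains the exponent $\kappa$ and the weight $|l|^{\kappa}$ in $\|\psi\|$.

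\textbf{Case $|k|\le Q_i$.} Bound $|\widehat{F_i}(l)|\lesssim Q_i^{-\kappa}$ uniformly. Then
\[
\sum_{l\ne0}|\widehat\psi(k-l)|\,|\widehat{F_i}(l)|\lesssim \log^{-r'}(q_i)\sum_{j}|\widehat\psi(j)| \le \|\psi\|\log^{-r'}(q_i),
\]
since $\sum_j|\widehat\psi(j)|\le\|\psi\|$ (the weight $|j|^\kappa\ge1$ for $j\ne0$). This case is immediate.

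\textbf{Case $|k|\ge Q_i$.} Split the sum as in $A_2,B_2$.
\begin{itemize}
\item On $A=\{|k-l|\le |k|/2\}$ we have $|l|\simeq|k|\ge Q_i$. Use $|\widehat\phi(\xi)|\lesssim|\xi|^{-N}$ with some $N\ge\kappa$, which gives $|\widehat{F_i}(l)|\lesssim Q_i^{-\kappa}(Q_i/|l|)^{N}\lesssim Q_i^{-\kappa}(Q_i/|k|)^{\kappa}=|k|^{-\kappa}$. Summing $|\widehat\psi(k-l)|$ then gives $\|\psi\||k|^{-\kappa}$.
\item On $B=\{|k-l|>|k|/2\}$, use $|\widehat{F_i}(l)|\le \widehat{F_i}(0)=1$, which holds because $F_i\ge0$. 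Insert $1\le (2|k-l|/|k|)^{\kappa}$, which gives $\sum|\widehat\psi(k-l)||k-l|^\kappa\,2^\kappa|k|^{-\kappa}\lesssim\|\psi\||k|^{-\kappa}$.
\end{itemize}
There is no need for a separate small-$l$ region $C$: the uniform bound $Q_i^{-\kappa}\le|k|^{-\kappa}$ already covers any $l$ with $|k-l|\le|k|/2$.

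The main obstacle is the first step: converting the hypothesis $|\widehat\mu(\xi)|\lesssim\log^{-r'}|\xi|$ at frequencies $q_i^{\beta(q_i)}|l|$ into the power form $Q_i^{-\kappa}$. This needs $\log(q_i^{\beta(q_i)}|l|)\ge\log(q_i^{\beta(q_i)})\gtrsim\log q_i$, which relies on $\beta(q_i)\to1$ (so $q_i$ must be large), together with the identity $\log^{r'}q_i=Q_i^{\kappa}$. One should also check that $\widehat{F_i}(l)=\widehat{\mu*\phi_i}(q_i^{\beta(q_i)}l)$ holds exactly, including the normalization that makes $\widehat{F_i}(0)=1$. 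I expect that to be routine given the support assumption $\mathrm{supp}\,\mu\subseteq[\delta,1-\delta]$ and $q_i^{-1}<\delta$.
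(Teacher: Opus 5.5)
Your argument is correct and is essentially the paper's proof: the same split at $|k-l|=|k|/2$, the same conversion $\log^{-r'}(q_i^{\beta(q_i)}|l|)\lesssim\log^{-r'}q_i=(q_i^{1-\beta(q_i)})^{-\kappa}$, and the same use of $\widehat\phi$-decay of order $\kappa$ on the near region when $|k|\ge q_i^{1-\beta(q_i)}$. The only deviation is on the far region, where you use $|\widehat{F_i}(l)|\le 1$ (valid since $F_i\ge0$ and $\widehat{F_i}(0)=1$) and the paper keeps the extra factor $\log^{-r'}q_i$, which is not needed. One small slip is in your closing remark: for $|k|\ge Q_i$ one has $Q_i^{-\kappa}\ge|k|^{-\kappa}$, not $\le$, so the near region is handled by the $\widehat\phi$-decay you actually use, not by the uniform bound.
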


\begin{proof}
    As in Lemma \ref{lem: bounded product lemma}, the proof comes down to bounding
    \begin{equation*}
        \left| \sum_{l \neq 0} \widehat{\psi}(k-l)\widehat{F_i}(l)  \right| \le
        \sum_{l \in A_2} |\widehat{\psi}(k-l)||\widehat{F_i}(l)| + \sum_{l \in B_2} |\widehat{\psi}(k-l)||\widehat{F_i}(l)|,
    \end{equation*}
where
\begin{equation*}
    A_2 = \left\{ l \neq 0 / |k-l|>\frac{|k|}{2} \right\}, \quad B_2 = \left\{ l \neq 0 / |k-l| \le \frac{|k|}{2} \right\}.
\end{equation*}

For the first sum, we invoke \eqref{eq: Fourier control for F_i the second time} to get
\begin{align*}
    \sum_{l \in A_2} |\widehat{\psi}(k-l)||\widehat{F_i}(l)| &\lesssim   \sum_{l \in A_2} |\widehat{\psi}(k-l)| \left( q_i^{\beta(q_i) }|l| \right)^{- \gamma' \left( q_i^{\beta(q_i) } |l|\right)} |\widehat{\phi}(q_i^{-1+\beta(q_i)}l)| \\
    &= \sum_{l \in A_2} |\widehat{\psi}(k-l)| \frac{|\widehat{\phi}(q_i^{-1+\beta(q_i)}l)|}{\log^{r'}(q_i^{\beta(q_i)} |l|)} \\
    &\le \frac{1}{(\log (q_i)-(2+a)r \log \log (q_i))^{r'}} \sum_{l \in A_2} |\widehat{\psi}(k-l)| \frac{|k-l|^{\frac{r'}{r(2+a)}}}{|k-l|^{\frac{r'}{r(2+a)}}} \\
    &\lesssim \| \psi \| \frac{1}{\log^{r'}(q_i)} \sup_{l \in A_2} |k-l|^{-\frac{r'}{r(2+a)}} .
\end{align*}
where in the second inequality we have used that $|l|\ge 1$ and $\| \widehat{\phi} \|_\infty \le 1$, and the third inequality holds for sufficiently large $q_i$.

The fact that $k \notin A_2$ implies that $|k-l|\ge 1$ for $l \in A_2$, and therefore immediately yields the bound we need for $0 \le |k| \le q_i^{1-\beta (q_i)}$. For $|k| \ge q_i^{1-\beta(q_i)}$, the desired bound follows after imposing $q_i \ge e$ and noting that for $l \in A_2$,
\begin{equation*}
    |k-l|^{-\frac{r'}{r(2+a)}} \le \left( \frac{|k|}{2} \right)^{-\frac{r'}{r(2+a)}} \lesssim  |k|^{-\frac{r'}{r(2+a)}}
\end{equation*}

Now consider the sum over $l \in B_2$. For $0 \le |k| \le q_i^{1-\beta(q_i)}$, we again use that $|l| \ge 1$ and $\| \widehat{\phi} \|_\infty \le 1$ to get that
\begin{align*}
    \sum_{l \in B} |\widehat{\psi}(k-l)||\widehat{F_i}(l)| &\lesssim \sum_{l \in B} |\widehat{\psi}(k-l)| \frac{1}{\log^{r'}(q_i^{\beta(q_i)}|l|)} \\
    &\lesssim \| \psi \| \frac{1}{\log^{r'}(q_i)},
\end{align*}
where the second inequality holds for sufficiently large $q_i$.

For $l \in B_2$ and $|k| \ge q_i^{1-\beta(q_i)}$, we use  that 
\begin{equation}\label{eq: bound the transform by a negative power}
    \left|\widehat{\phi}\left(q_i^{-1+\beta(q_i)}|l| \right) \right| \lesssim (q_i^{-1+\beta(q_i)}|l|)^{-\frac{r'}{r(2+a)}}
\end{equation}
to get that
\begin{align*}
    \sum_{l \in B_2} |\widehat{\psi}(k-l)||\widehat{F_i}(l)| &\lesssim  \sum_{l \in B_2} |\widehat{\psi}(k-l)| (q_i^{\beta(q_i)}|l|)^{-\gamma'(q_i^{\beta(q_i)}|l|)} (q_i^{-1+\beta(q_i)}|l|)^{-\frac{r'}{r(2+a)}} \\
    &= \sum_{l \in B_2} |\widehat{\psi}(k-l)| \frac{|l|^{-\frac{r'}{r(2+a)}} \left( \log^{(2+a)r} (q_i) \right)^{\frac{r'}{r(2+a)}}}{(\log |l| + \log (q_i) - (2+a)r \log \log (q_i))^{r'}} \\
    &\lesssim \| \psi \| |k|^{-\frac{r'}{r(2+a)}},
\end{align*}
where  in the second inequality we have used that $|l| \ge 1$ and that $|l| \ge |k|/2$ for $l \in B_2$.
\end{proof}

Consider a subsequence $(q_{i_j})_j$ of $(q_i)_i$ such that $q_{i_1} $ satisfies all the conditions we have needed on $q_i$ thus far. We proceed analogously and define
\begin{equation}
    G_0 = \chi_{[0,1]}, \quad G_m = \prod_{j=0}^m F_{i_j}.
\end{equation}
We will need to bound $\| G \|$, but this calculation is entirely analogous so we get
\begin{equation*}
    \| G_m \| \le c_{m,r,r',a} \prod_{j=1}^m q_{i_j}^{c_{r,r',a}}
\end{equation*}
and since this bound depends only on the parameters and the first $m$ terms of the subsequence, it allows us to pick $q_{i_{m+1}} $ such that
\begin{equation*}
    \| G_m \| \le \log \log (q_{i_{m+1}} ),
\end{equation*}
which in turn guarantees that
\begin{equation}\label{eq: final step bound on G_m}
    |\widehat{G_{m+1}}(k)- \widehat{G_m}(k)| \lesssim  
    \left\{ \begin{array}{lcc}  \frac{\log \log q_{i_{m+1}}}{\log^{r'}(q_i)}  \ \textnormal{if} \ 0 \le |k| \le q_{i_{m+1}}^{1-\beta(q_{i_{m+1}})}   \\ \\ 
    \log \log (q_{i_{m+1}}) |k|^{-\frac{r'}{r(2+a)}} \ \textnormal{if} \ |k| \ge q_{i_{m+1}}^{1-\beta(q_{i_{m+1}})}. \end{array} \right.
\end{equation}

We analogously define $\nu$ as the weak limit of $G_m \lambda$. For $k=0$, we get that
\begin{equation*}
    | \widehat{G_{m+1}}(0) -1 | \lesssim \sum_{j=1}^\infty \frac{\log \log (q_{i_{m+1}})}{\log^{r'} (q_{i_{m+1}})}.
\end{equation*}
If $q_{i_j} $ increases fast enough (analogously to condition \ref{item: F2}) this yields
\begin{equation*}
    |\widehat{G_{m+1}}(0)-1| \le \frac{1}{2}.
\end{equation*}
Taking limit on $m$ guarantees that the measure $\nu$ is not the zero measure. To prove the desired Fourier decay, pick $k \neq 0$ and $m$ such that $q_{i_{m+1}}^{1-\beta(q_{i_{m+1}})} \ge |k|$ so that
\begin{align*}
    |\widehat{G_{m+1}}(k)| &\le \sum_{j=0}^m |\widehat{G_{j+1}}(k) - \widehat{G_j}(k) | \\
    &\le  \sum_{j \in A_3} |\widehat{G_{j+1}}(k) - \widehat{G_j}(k) | + \sum_{j \in B_3} |\widehat{G_{j+1}}(k) - \widehat{G_j}(k) |,
\end{align*}
where
\begin{equation*}
    A_3 = \left\{ j/ q_{i_{j+1}}^{1-\beta(q_{i_{j+1}})} \le |k| \right\}, \quad B_3 = \left\{ j/ q_{i_{j+1}}^{1-\beta(q_{i_{j+1}})} \ge |k| \right\}.
\end{equation*}

For the first sum we can use \eqref{eq: final step bound on G_m} to get
\begin{align*}
    \sum_{j\in A_3} |\widehat{G_{j+1}}(k) - \widehat{G_j}(k) | &\lesssim \sum_{j\in A_3} \frac{ \log \log (q_{i_{j+1}})}{|k|^{\frac{r'}{r(2+a)}}}  \\ &= |k|^{-\frac{r'}{r(2+a)}+\epsilon} \sum_{j\in A_3} \frac{\log \log (q_{i_{j+1}})}{|k|^\epsilon}\\
    &\le |k|^{-\frac{r'}{r(2+a)}+\epsilon} \sum_{j\in A_3} \frac{\log \log (q_{i_{j+1}})}{\log^{\epsilon r (2+a)}(q_{i_{j+1}})},
\end{align*}
while for the second sum \eqref{eq: final step bound on G_m} yields
\begin{align*}
    \sum_{j\in B_1} |\widehat{G_{j+1}}(k) - \widehat{G_j}(k) | &\lesssim  \sum_{j\in B_1} \frac{\log \log (q_{i_{j+1}})}{\log^{r'} (q_{i_{j+1}})}\\
    &= |k|^{-\frac{r'}{r(2+a)}+\epsilon} \sum_{j\in B_1} \frac{\log \log (q_{i_{j+1}}) |k|^{\frac{r'}{r(2+a)}-\epsilon}}{\log^{r'} (q_{i_{j+1}})} \\
    &\le |k|^{-\frac{r'}{r(2+a)}+\epsilon} \sum_{j\in B_1} \frac{\log \log (q_{i_{j+1}}) \left( \log^{(2+a)r} q_{i_{j+1}} \right)^{\frac{r'}{(2+a)r}-\epsilon}}{\log^{r'} (q_{i_{j+1}})} \\
    &\le |k|^{-\frac{r'}{r(2+a)}+\epsilon} \sum_{j\in B_1} \frac{\log \log (q_{i_{j+1}})}{\log^{\epsilon r(2+a)} q_{i_{j+1}}}.
\end{align*}
All in all, we get that
\begin{equation*}
    |\widehat{G_{m+1}}(k)| \lesssim |k|^{-\frac{r'}{r(2+a)} + \epsilon} \sum_{j \ge 0} \frac{\log \log (q_{i_{j+1}})}{\log^{\epsilon r(2+a)}(q_{i_{j+1}})}.
\end{equation*}
We can choose $(q_{i_j})_j$ to grow fast enough (analogously to condition \ref{item: F3}) so that the series always converges, with the growth rate independent of $\epsilon$. Finally
, taking limit on $m$ amounts to
\begin{equation*}
    |\widehat{\nu}(k)| \lesssim_{\epsilon, \{ q_{i_j} \} } |k|^{-\frac{r'}{r(2+a)}+\epsilon}
\end{equation*}
as desired.

\end{proof}

\section{Appendix}

We summarize in this appendix all the conditions that are needed to be imposed on the sequence $\{q_i\}$ in order to produce the results in Theorem \ref{thm: our construction theorem}. As a first elementary condition we impose that $q_i^{\beta(q_i)}  \in \mathbb{N}$.

Then, the full list of conditions on the sequence $(q_i)_i$ is the following, separated into two subclasses: the ``L'' conditions describing that the terms of the sequence must be \textbf{L}arge enough, and the ``F'' conditions describing how \textbf{F}ast the sequence should increase.

The \textbf{L} conditions: $q_1$ should be large enough so that 

\begin{enumerate}[(L\arabic*)]
    \item $q_1 > e^{2^{\frac{1}{(1+a)r}}}$. For intervals with radius $q_i^{-1}$ and centers $\frac{m}{pq_i^{\beta(q_i)}}$, $(p:m)=1$, this guarantees that the minimal distance between any two centers, $\frac{1}{pq_i^{\beta(q_i)}}$ is larger that $2q_i^{-1}$, and as such these intervals are disjoint. \label{item: L1} 

    \item $q_1 > e^e$. Since $e^e >1$, this fixes the formula for $\gamma(q_i)$ and $\beta(q_i)$. Moreover, this guarantees that for every $i \in \mathbb{N}$, $\log \log q_i$ is a real number greater than 1.
    \label{item: L2}
    
    \item $q_1 > e^{\lambda^{1/r}}$, where $\lambda$ is the number such that $$x \ge \lambda \implies \frac{1}{2} \le \frac{\# \mathcal{P} \cap [x/2,x]}{\frac{x}{\log x}} \le 2$$ (Prime Number Theorem). \label{item: L3}
    
    \item  $q_1 > e^{4^{1/r}}$. This implies $\frac{1}{2} r \log \log (q_i) \ge \log 2$, and as such $$\log \left( q_i^{\gamma(q_i)}/{2} \right) \ge \frac{1}{2} \log \left(q_i^{\gamma(q_i)}\right) \quad \textnormal{for every} \ i.$$ \label{item: L4}

    \item  $q_1 > \lambda_{a,r}$, where $\lambda_{a,r}$ is the number such that
    \begin{equation*}
        x > \lambda_{a,r} \implies \frac{x}{\log x} \ge (2+a)r+1.
    \end{equation*}
    This guarantees that for every $i$
    \begin{equation*}
        -\log (q_{i}) + [(2+a)r+1] \log \log (q_{i}) \le 0.
    \end{equation*}\label{item: L5}

    \item $q_1 > e^r$. This guarantees that all the $q_i$ are in the domain where the function $\frac{x}{\log^r (x)}$ is increasing.
    \label{item: L6}

    \item $q_1 > \exp(\exp(1/r))$. This guarantees two things for every $i \in \mathbb{N}$. First, that $e \le \log^r q_i$; second, that the function $\frac{\log \log x}{\log^r x}$ is decreasing on $x \ge q_i$. \label{item: L7}

    \item $q_1 > \exp (2^{\frac{1}{ar}})$. This guarantees that for every $i \in \mathbb{N}$ $q_i^{-s(q_i)} > 2q_i^{-1}$.
    \label{item: L8}

    \item $q_1 > \lambda_{a,r}$, where $\lambda_{a,r}$ is the number such that
    \begin{equation*}
        x > \lambda_{a,r} \implies \log x \le x^{\frac{1}{2ar}}.
    \end{equation*}
    This guarantees that for every $i$
    \begin{equation*}
        \log (q_{i+1}) - ar\log \log (q_{i+1}) \ge \frac{1}{2} \log (q_{i+1}).
    \end{equation*}
    \label{item: L9}

    \item $q_1 > e^{\lambda_r}$, where $\lambda_r$ is the number such that
    \begin{equation*}
        x > \lambda_r \implies \log x \le x^{\frac{r}{4}}.
    \end{equation*}
    This guarantees that for every $i$
    \begin{equation*}
        \log \log (q_{i+1}) \le \log^{\frac{r}{4}}(q_{i+1}).
    \end{equation*}
    \label{item: L10}

\end{enumerate}

The \textbf{F} conditions: $(q_i)_i$ should increase fast enough so that
\begin{enumerate}[(F\arabic*)]

    \item $q_{m+1} \ge \exp ( \exp (2cm^2 q_m^2 \log^{m(ar+1)}(q_m)))$, with $c=c_{r,\phi,m}$ the constant in \eqref{eq: bound for the norm of Gm}. This guarantees that
    \begin{align*}
        \| G_m \| &\le c \left[ 1+ \sum_{1 = j_1, j_2}^m q_{j_1} q_{j_2} \right]  \prod_{i=1}^m \log^{ar} (q_i) \log \log (q_i) \\ &\le 2cm^2 q_m^2 \log^{m(ar+1)}(q_m) \le \log \log (q_{m+1}).
    \end{align*}
    \label{item: F1}
    
    \item $q_i \ge \exp \left( (c 2^{i} )^{{2}/{r}} \right)$, with $c$ the constant in \eqref{eq: condition for positive mass}. This, along with condition \ref{item: L10}, guarantees that
    \begin{equation*}
    |\widehat{G_{m+1}}(0)-1| \le c \sum_{i=1}^\infty \frac{\log^2 \log (q_{i+1})}{\log^{r}(q_{i+1})} \le c \sum_{i=1}^\infty \frac{2^{-(i+1)}}{c}  = \frac{1}{2}.
\end{equation*}
    \label{item: F2}
    
    \item $q_i \ge e^{2^i}$. This guarantees that the sum
    \begin{equation*}
        \sum_{i \in \mathbb{N}} \frac{1}{\log^{\epsilon/2}(q_{i+1})}
    \end{equation*}
    converges to a finite value which can be bounded by a constant that depends only on $\epsilon$.
    \label{item: F3}
    
    \item $q_{i+1} \ge \max \left\{ 2 q_i, \exp \left( \left( 2c \| \widehat{\psi} \|_1 q_i \right)^{\frac{2}{r}} \right) \right\}$, with $c$ the constant in \eqref{eq: mu and mu0 are close}. This, along with condition \ref{item: L10}, guarantees that 
    \begin{align*}
        c\| \widehat{\psi} \|_1 \sum_{m = m_0}^\infty \frac{\log^2 \log q_{m+1}}{\log^{r} q_{m+1}} &\le c\| \widehat{\psi} \|_1 \sum_{m = m_0}^\infty \frac{1}{\log^{\frac{r}{2}} q_{m+1}} \\
        &\le \sum_{m=m_0}^{\infty} \frac{1}{2 q_m} \le \sum_{m=m_0}^\infty \frac{1}{2^{m+1-m_0}q_{m_0}} \le q_{m_0}^{-1}.
    \end{align*}
    \label{item: F4}
    
    \item $q_{i+1} \ge \exp \exp \left( c Q_i \right)$ where $c = C_{\phi, i+2}$ the constant in  \eqref{eq: bounded by a constant on epsilon} and
    \begin{equation*}
        Q_i = \prod_{j=1}^{i} \frac{q_j^{1-\gamma(q_j)-\beta(q_j)}}{\# \mathcal{P}_j}.
    \end{equation*}
    This guarantees that 
    \begin{align*}
        c Q_{m_0 -2} \frac{2^{1+ar+\epsilon} r \log \log (q_{m_0 -1})}{\log^\epsilon q_{m_0 -1}} &\le 2^{1+ar+\epsilon} r c_\epsilon \frac{c Q_{m_0 -2}}{\log^{\epsilon/2} q_{m_0 -1}}\\ 
        &\le 2^{1+ar+\epsilon} r c_\epsilon \frac{c Q_{m_0 -2}}{e^{\frac{\epsilon}{2} c Q_{m_0 -2}}} \\
        &\le 2^{1+ar+\epsilon} r c_\epsilon d_\epsilon,
    \end{align*}
    where $\displaystyle c_\epsilon = \max_{x \in [e^e, \infty)} \frac{\log \log x}{\log^{\epsilon/2}x}$ and $\displaystyle d_\epsilon = \max_{x \in [0,\infty]} \frac{x}{e^{\frac{\epsilon}{2}x}}$.
    \label{item: F5}

\end{enumerate}

\section{Acknowledgements}
This research was supported by grants: PICT 2018-3399
(ANPCyT), PICT 2019-03968 (ANPCyT) and CONICET PIP 11220210100087.

\providecommand{\bysame}{\leavevmode\hbox to3em{\hrulefill}\thinspace}
\providecommand{\MR}{\relax\ifhmode\unskip\space\fi MR }
\providecommand{\MRhref}[2]{%
  \href{http://www.ams.org/mathscinet-getitem?mr=#1}{#2}
}
\providecommand{\href}[2]{#2}

\end{document}